\documentclass[10pt,letterpaper]{article}

\usepackage[
top=2cm,
bottom=2cm,
left=3cm,
right=3cm
]{geometry}

\usepackage[titletoc,title]{appendix}
\usepackage{amsmath,amssymb,mathrsfs,amssymb,amsthm}
\usepackage{enumerate}
\usepackage{cases}
\usepackage{graphicx}
\usepackage[colorlinks=true, allcolors=blue]{hyperref}
\usepackage{color}
\usepackage{authblk}
 \usepackage{booktabs}
\newcommand{\vep}{\varepsilon}
\newcommand{\R}{\mathbb R}

\definecolor{HW}{rgb}{1,0,0}
\definecolor{HW1}{rgb}{0,0,1}

\numberwithin{equation}{section}
\numberwithin{figure}{section}
\numberwithin{table}{section}

\newtheorem{theorem}{Theorem}[section]
\newtheorem{lemma}{Lemma}[section]
\newtheorem{remark}{Remark}[section]

\title{Eigenvalue Asymptotics in High-Contrast Media
}

\author[1]{Huaian Diao\thanks{E-mail: \texttt{diao@jlu.edu.cn}}}
\author[2]{Long Li\thanks{E-mail: \texttt{long.li@ricam.oeaw.ac.at}}}
\author[2]{Mourad Sini\thanks{E-mail: \texttt{mourad.sini@oeaw.ac.at}}}
\author[1]{Qilong Zhai\thanks{E-mail: \texttt{zhaiql@jlu.edu.cn}}}
\affil[1]{School of Mathematics, Jilin University, Changchun 130012, China}
\affil[2]{RICAM, Austrian Academy of Sciences, Altenberger Stra\ss e 69, 4040 Linz, Austria}

\date{}

\begin{document}

\maketitle

\begin{abstract}
\noindent We investigate the eigenfrequency asymptotics of a bounded acoustic cavity
containing a shrinking high-contrast inclusion. This configuration is motivated by contrast-enhanced ultrasound imaging, in which microbubbles are employed as acoustic contrast agents.

\noindent We identify dimension-dependent material scalings that keep the inclusion-induced resonance in a fixed order-one frequency regime as the
inclusion shrinks. Our main result gives a complete asymptotic description of the spectrum in any prescribed bounded frequency window. Two distinct
spectral mechanisms arise: the background
Dirichlet eigenfrequencies persist as perturbed eigenvalue clusters, while the singular material contrast creates an additional Minnaert eigenvalue branch. The limiting Minnaert frequency is explicit in both dimensions, with a capacitance-based expression in three dimensions and an area-based expression in two dimensions.

\noindent Two-dimensional numerical experiments confirm both the perturbation of
the background Dirichlet eigenfrequencies and the emergence of the
additional Minnaert branch.

\end{abstract}

\textbf{Keywords}: high-contrast acoustic inclusions, Minnaert frequency,
spectral perturbation, eigenfrequency asymptotics

\section{Introduction and statement of the results}

The Minnaert resonance of a gas bubble in a liquid is one of the most classical examples of a subwavelength acoustic resonance \cite{Minnaert}. It corresponds to the breathing oscillation of the bubble and may occur at a wavelength much larger than the bubble size \cite{CommanderProsperetti1989, Leighton1994}. This strong resonant response is the physical basis for the use of microbubbles in contrast-enhanced ultrasound imaging and is closely related to field amplification and localization near small inclusions \cite{DabrowskiGhandricheSini2021, Senapati-Sini}. They are also extensively used in the modeling and design of effective bulks in metamaterial theory \cite{ACCS-20, AFGLZ-17, AFZ-17}. In the scattering setting, resonances are usually understood as poles of a meromorphically continued outgoing resolvent of the natural Hamiltonian \cite{DM}. For bubbles of arbitrary shape, the natural Hamiltonian has been identified generating a sequence of complex resonances near the real axis, where the Minnaert resonance is the lowest order one, and the leading asymptotic formula for the Minnaert resonance, together with
asymptotic characterizations of the associated scattered field, have been
established in \cite{AZ18, LS-04, LS-042, LS-06, MPS}. All of these works concern scattering resonances or enhanced scattering in an unbounded background.

 In contrast, the present work concerns a real eigenvalue problem arising from a singularly scaled acoustic
inclusion. More precisely, let $D\subset\R^d$, $d=2,3$, be a bounded connected $C^{2}$-smooth domain and let  
\begin{align}\label{eq:def Omega_e}
\Omega_\varepsilon(y_0) := \{\,x : x = y_0 + \varepsilon(y-y_0),\; y\in\Omega\,\},
\qquad 0<\varepsilon\ll1,
\end{align}
with center $y_0 \in D$. Here, $\Omega\Subset D$ is a bounded connected reference inclusion with $C^2-$ smooth boundary $\Gamma:=\partial\Omega$. In order to describe the dimension-dependent balanced regime, we introduce 
\begin{align*}
\tau_\varepsilon^{(d)}:= \begin{cases} \varepsilon^2, & d=3,\\ \varepsilon^2\left|\ln \vep\right|, & d=2. \end{cases}
\end{align*}
The acoustic parameters are then defined by 
\begin{align} \label{eq:17}
\rho_{\vep}(x)=
\begin{cases}
\rho_0, & x\in D\setminus\overline{\Omega_\vep(y_0)},\\
\rho_1\tau_\varepsilon^{(d)}, & x\in \Omega_\vep(y_0),
\end{cases}
\qquad
k_{\vep}(x)=
\begin{cases}
k_0, & x\in D\setminus\overline{\Omega_\vep(y_0)},\\
k_1\tau_\varepsilon^{(d)}, & x\in \Omega_\vep(y_0),
\end{cases}
\end{align}
with positive constants $\rho_0,\rho_1,k_0,k_1$. Hence
\begin{align*}
c_0=\sqrt{\frac{k_0}{\rho_0}},
\qquad c_1=\sqrt{\frac{k_1}{\rho_1}}
\end{align*}
are independent of $\vep$.  We consider signed frequencies $z\in\R$ for the transmission eigenvalue problem
\begin{align}
&\Delta u + z^2c_0^{-2} u = 0 \quad {\rm{in}}\; D \backslash \Omega_\vep(y_0), \label{eq:7} \\
&\Delta u + z^2 c_1^{-2} u = 0 \quad {\rm{in}}\; \Omega_\vep(y_0),\\
& u^+ = u^- \quad {\rm{on}}\; \partial \Omega_\vep(y_0), \\
&\partial_{\nu} u^+ = \frac{\rho_0}{\rho_1\tau_ \vep^{(d)}}\partial_{\nu} u^- \quad {\rm{on}}\; \partial \Omega_\vep(y_0), \label{eq:flux}\\
& u = 0 \quad \mathrm{on}\;\partial D. \label{eq:8}
\end{align}
Here $\nu$ is the outward unit normal to $\Omega_\vep(y_0)$.  We denote by $\Sigma^{(d)}_\vep$ the signed spectrum of \eqref{eq:7}--\eqref{eq:8}. Since the eigenvalue problem depends on the spectral parameter only through $z^2$, $\Sigma^{(d)}_\vep$ is symmetric with respect to the origin. Throughout this paper, we therefore restrict our attention to its positive part.

 The above formulation provides an idealized forward spectral model for bubble-assisted acoustic detection. In contrast-enhanced ultrasound imaging, injected microbubbles act as acoustic contrast agents whose resonant oscillations generate detectable spectral signatures absent from the unperturbed background response. In a bounded acoustic environment, such signatures are naturally reflected in the frequency response of the associated boundary value problem. This motivates the question of whether a shrinking high-contrast inclusion merely perturbs the existing cavity spectrum, or whether it can generate an additional spectral branch of its own.

 We answer this question by studying the spectrum in a fixed bounded frequency window under the simultaneous geometric shrinking and material scaling of the inclusion. Two distinct spectral mechanisms appear. The first is perturbative: the Dirichlet eigenvalues of the unperturbed cavity persist as perturbed eigenvalue clusters. The second is not inherited from the background cavity spectrum: the high-contrast transmission scaling creates an additional Minnaert eigenvalue branch. Thus the problem is not only an eigenvalue perturbation problem for a small inclusion, but also a mechanism for generating a new spectral branch through a shrinking high-contrast resonator.

To state the result, we introduce the following notation. Let
\begin{align*}
\Lambda_D
:=
\left\{
z>0:\; \exists\, v\in H_0^1(D),\ v\ne0,\ 
\Delta v+z^2c_0^{-2}v=0 \text{ in }D
\right\}
\end{align*}
be the set of background Dirichlet frequencies of the cavity. In three dimensions, the limiting Minnaert frequency is
\begin{align}\label{eq:minnaert-3d}
    \omega_M^{(3)}
    :=
    \left(
    \frac{\mathcal C_\Omega k_1}{|\Omega|\rho_0}
    \right)^{1/2},
\end{align}
where
\begin{align}
    \mathcal C_\Omega
    :=
    \int_{\Gamma}
    \left[(S_\Gamma^0)^{-1}1\right]\,d\sigma
\end{align}
is the electrostatic capacitance of the reference inclusion
$\Omega$, with $\Gamma=\partial\Omega$, while in two dimensions, where we have
$\tau_\varepsilon=\varepsilon^2|\log\varepsilon|$, the limiting
Minnaert frequency is
\begin{align}\label{eq:minnaert-2d}
    \omega_M^{(2)}:=
    \left(\frac{2\pi k_1}{|\Omega|\rho_0}
    \right)^{1/2}.
\end{align}
We assume throughout the main result that the limiting Minnaert frequency is
separated from the background Dirichlet spectrum, namely
\begin{align}\label{eq:assup}
    \omega_M^{(d)}\notin \Lambda_D.
\end{align}
Such an assumption is not restricive as we can always tune the bubbles bulk $k_1$ (or its capacitance or volume) to ensure it. 
\begin{theorem}[Finite-window eigenvalue asymptotics]\label{thm:main}
Let $d\in\{2,3\}$, and consider the singularly scaled acoustic
transmission problem \eqref{eq:7}--\eqref{eq:8}. Let $\Sigma_\varepsilon^{(d)}$
denote the set of positive eigenfrequencies of this problem, counted
with algebraic multiplicity. Let $\Lambda_D$ be the set of background
Dirichlet frequencies, and let $\omega_M^{(d)}$ be the limiting
Minnaert frequency defined by \eqref{eq:minnaert-3d} for $d=3$ and by
\eqref{eq:minnaert-2d} for $d=2$. 
Let $I\Subset(0,\infty)$ be a bounded open interval such that
\begin{align*}
    \partial I\cap\bigl(\Lambda_D\cup\{\omega_M^{(d)}\}\bigr)=\emptyset .
\end{align*}
Set
\begin{align*}
    \eta_d(\varepsilon)
    :=
    \begin{cases}
    \varepsilon, & d=3,\\[0.3em]
    |\ln\varepsilon|^{-1}, & d=2.
    \end{cases}
\end{align*}
Under the assumption \eqref{eq:assup}, for all sufficiently small $\varepsilon$, the following statements hold true.

\begin{enumerate}[(a)]
    \item \label{a1} If $\omega_M^{(d)}\in I$, then there exists $\delta>0$ such that $\{s\in I: |s- \omega_M^{(d)}|<\delta \}$, which is disjoint of $\Lambda_D$, contains exactly one eigenfrequency satisfying
\begin{align} \label{eq:asy}
\lambda_{M,\varepsilon}^{(d)}
= \omega_M^{(d)} + O\bigl(\eta_d(\varepsilon)\bigr), \quad \mathrm{as}\; \vep \rightarrow 0.
\end{align}

\item \label{a2} Each background Dirichlet frequency $z_0\in I\cap\Lambda_D$
generates a cluster of eigenfrequencies converging to $z_0$. More
precisely, if
\begin{align} \label{eq:18}
m_D(z_0)
:= \dim\ker\left(-\Delta_D-z_0^2c_0^{-2}\right),
\end{align} 
then there are exactly $m_D(z_0)$ eigenfrequencies of $\Sigma_\varepsilon^{(d)}$, counted with algebraic multiplicity, in a
neighbourhood of $z_0$, and these eigenfrequencies satisfy
\begin{align}\label{eq:31}
\operatorname{dist} 
\left(
\Sigma_\varepsilon^{(d)}\cap B_\delta(z_0),\, z_0
\right)
\le C\eta_d(\varepsilon)
\end{align}
for some constants $C>0$ and $\delta>0$ independent of
$\varepsilon$.
Moreover, no other eigenfrequencies of $\Sigma_\varepsilon^{(d)}$ lie
in $I$ for sufficiently small $\varepsilon$. 
\end{enumerate}
\end{theorem}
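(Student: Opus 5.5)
We reduce the transmission eigenvalue problem to a boundary integral equation on $\partial\Omega_\varepsilon(y_0)$ and then count its characteristic values with the Gohberg--Sigal theory (generalized Rouché theorem). Let $G_z(x,y)$ be the Dirichlet Green's function of $\Delta+z^2c_0^{-2}$ in $D$; it is meromorphic in $z$ with poles exactly at $\Lambda_D$. Let $\Phi_{z/c_1}$ be the free-space fundamental solution inside the inclusion. For $z\notin\Lambda_D$ we write
\begin{align*}
u=\mathcal S^{D}_{z}[\psi] \ \text{in } D\setminus\overline{\Omega_\varepsilon(y_0)},
\qquad
u=\mathcal S^{c_1}_{z}[\phi] \ \text{in } \Omega_\varepsilon(y_0),
\end{align*}
with $\mathcal S^D_z$ built from $G_z$ and $\mathcal S^{c_1}_z$ from $\Phi_{z/c_1}$. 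The transmission conditions become a $2\times 2$ system $\mathcal A_\varepsilon(z)(\phi,\psi)=0$ on $\partial\Omega_\varepsilon(y_0)$, and the positive eigenfrequencies in $I\setminus\Lambda_D$ are exactly the characteristic values of $\mathcal A_\varepsilon(z)$. We rescale to $\Gamma$ via $x=y_0+\varepsilon(y-y_0)$. The $d$-dependent expansions then read
\begin{align*}
\Phi_{z\varepsilon/c_1} &= \Phi_0 + O(\varepsilon) \quad (d=3),\\
\Phi_{z\varepsilon/c_1} &= \Phi_0 + \tfrac{1}{2\pi}\ln\varepsilon + O(1) \quad (d=2),
\end{align*}
and $G_z(x,y)=\Phi_0(x-y)+R_z(x,y)$ with $R_z$ smooth near $(y_0,y_0)$. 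The flux factor $\rho_0/(\rho_1\tau^{(d)}_\varepsilon)$ multiplies the interior normal derivative, and $c_1$ is fixed.

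\textbf{Step 1: one-dimensional reduction.} After scaling, the leading operator is $\mathcal A_0=$ (jump relations with $S^0_\Gamma$, $-\tfrac12+K^{0,*}_\Gamma$). The interior Neumann datum is forced to be $O(\tau_\varepsilon)$, so at leading order the interior field solves a Neumann problem. This problem is solvable only if its mean flux vanishes, and this solvability condition produces a one-dimensional defect: $\ker\mathcal A_0$ is spanned by $(S^0_\Gamma)^{-1}1$ in the density variable. We use a Schur complement / Lyapunov--Schmidt reduction onto this kernel. Integrating the flux condition over $\Gamma$, using $\int_{\Omega_\varepsilon}\Delta u=-z^2c_1^{-2}\int u$, and matching with the exterior single layer (which carries the capacitance in 3D and the logarithm in 2D), we obtain a scalar function
\begin{align*}
f_\varepsilon(z) = \frac{z^2\rho_0|\Omega|}{k_1}-\mathcal C_\Omega + \eta_d(\varepsilon)\,g_\varepsilon(z) \quad (d=3),
\end{align*}
with $g_\varepsilon$ holomorphic and locally uniformly bounded off $\Lambda_D$. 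In 2D the capacitance is replaced by $2\pi$: the term $\tfrac{1}{2\pi}\ln\varepsilon$ dominates $S_\Gamma$ on constants, which is why the scaling is $\tau=\varepsilon^2|\ln\varepsilon|$ and the remainder is $O(|\ln\varepsilon|^{-1})$. The remainder $g_\varepsilon$ contains $R_z(y_0,y_0)$, which blows up only near $\Lambda_D$. The simple zero of the leading part at $\omega_M^{(d)}$, combined with Rouché's theorem on a small disk avoiding $\Lambda_D$ (possible by \eqref{eq:assup}), gives exactly one zero $\omega_M^{(d)}+O(\eta_d)$. Since the problem is self-adjoint in a weighted $L^2$ space, this zero is real and lies in $\Sigma^{(d)}_\varepsilon$, which proves (a).

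\textbf{Step 2: the Dirichlet clusters (b).} Near $z_0\in\Lambda_D$ the Green's function parametrization breaks down, so we use a variational/resolvent argument instead. The eigenproblem is the self-adjoint pencil on $H^1_0(D)$
\begin{align*}
\int_D \rho_\varepsilon^{-1}\nabla u\cdot\nabla\bar v = z^2\int_D k_\varepsilon^{-1}u\bar v.
\end{align*}
We equivalently write $G_z=\sum_{j}\frac{v_jv_j(\cdot)}{z^2c_0^{-2}-\mu_j}+\tilde G_z$, isolating the $m_D(z_0)$ singular modes. The reduced characteristic function then becomes an $m_D(z_0)\times m_D(z_0)$ matrix-valued function $(z^2-z_0^2)\,\mathrm{Id} + \eta_d(\varepsilon)M_\varepsilon(z)$, with $M_\varepsilon$ holomorphic and bounded on $B_\delta(z_0)$. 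Here $\eta_d$ comes from the inclusion-coupling term, e.g.\ $v_i(y_0)v_j(y_0)$ times the capacity-type quantity. Gohberg--Sigal on $\partial B_\delta(z_0)$ gives exactly $m_D(z_0)$ characteristic values counted with multiplicity. A determinant bound then localizes them within $C\eta_d(\varepsilon)$ of $z_0$; in the self-adjoint setting we instead use min–max to get the distance estimate directly.

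\textbf{Step 3: no other eigenvalues.} On the compact set $K=\overline I\setminus\bigl(\bigcup_{z_0}B_\delta(z_0)\cup B_\delta(\omega_M^{(d)})\bigr)$ both $f_\varepsilon$ and the full operator $\mathcal A_\varepsilon(z)$ are uniformly invertible for small $\varepsilon$, since the leading terms do not vanish there and the remainders tend to zero uniformly. Hence $\mathcal A_\varepsilon(z)$ has no characteristic values in $K$, and the hypothesis on $\partial I$ ensures the count is stable at the endpoints. Algebraic and geometric multiplicities agree by self-adjointness, which matches the counting in the statement.

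\textbf{Main obstacle.} The hardest step is the uniform error control in the Schur reduction, in particular the 2D case. There the logarithmic singular part must be handled by splitting $(S_\Gamma+\tfrac{1}{2\pi}\ln\varepsilon)^{-1}$ into its constant mode and its complement. We will first check that this splitting yields remainders of size $|\ln\varepsilon|^{-1}$ uniformly for $z$ in compact sets avoiding $\Lambda_D$.
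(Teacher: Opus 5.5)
Your outline is correct. For part (a) it uses the paper's mechanism in a different representation. The paper writes a Lippmann--Schwinger volume/boundary system with $G_D$, eliminates the volume unknown, and splits with $\mathcal P,\mathcal Q$ onto the equilibrium density. You use two single layers (interior with speed $c_1$, exterior with $G_D$) and perform the same one-dimensional Lyapunov--Schmidt reduction, with the scalar equation obtained by integrating the flux condition via Green's identity. Both arguments finish with Rouch\'e near $\omega_M^{(d)}$.

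The genuine difference is in part (b). The paper avoids the pole of $G_D$ by switching to the free-space kernel, with $\partial_\nu u|_{\partial D}$ as an extra unknown. It takes a Schur complement, using the $(\tau_\varepsilon^{(d)})^{-1}$ control of the $\mathcal P$-component from Lemma~\ref{lem:scattering-resonance}. It then applies the operator-valued Rouch\'e theorem to $\frac12\mathbb I-\mathcal K^{z/c_0,*}_{\partial D}$ plus an $O(\eta_d(\varepsilon))$ remainder. One uniform estimate then covers both the clusters and the eigenvalue-free part of $I$. The cost is that it rests on cited facts about $\frac12\mathbb I-\mathcal K^{z/c_0,*}_{\partial D}$: its characteristic values are exactly $\Lambda_D$, they are semisimple with kernel dimension $m_D(z_0)$, and the inverse is bounded by $C|z-z_0|^{-1}$. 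It also needs the Green-identity Lemma~\ref{lem:local-factorization} to get first-order zeros.

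Splitting off the pole of $G_D$ at $z_0$ instead gives you a Grushin-type reduction to the explicit $m_D(z_0)\times m_D(z_0)$ matrix $(z^2-z_0^2)\mathrm{Id}+\eta_d(\varepsilon)M_\varepsilon(z)$. The multiplicity is then simply the matrix size, and both the counting and the $O(\eta_d)$ localization are finite-dimensional. The leading part of $M_\varepsilon$ is a scalar (regular away from $\omega_M^{(d)}$) times $v_i(y_0)v_j(y_0)$, i.e.\ rank one. This explains why, in the tables, at most one eigenvalue per cluster moves at order $\eta_d$.

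What you pay is twofold. You must solve the inclusion problem with the regularized kernel uniformly on $B_\delta(z_0)$; this is the same $\mathcal P$-mode control as in your Step 1 and is available since $z_0\neq\omega_M^{(d)}$. You must also check that your reductions are equivalences after extension, so that semisimplicity of the self-adjoint pencil really transfers to the zeros of the determinant.

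Some smaller points:
\begin{itemize}
\item The min--max aside is not needed, and it is not direct with coefficients of size $(\tau_\varepsilon^{(d)})^{-1}$; Rouch\'e on $\partial B(z_0,M\eta_d(\varepsilon))$ already localizes the cluster.
\item In 2D, $\mathcal S^0_\Gamma$ can be singular (e.g.\ for the unit disk), so the kernel should be described by the equilibrium density of Remark~\ref{re1}.
\item Recheck the signs of your $\frac{1}{2\pi}\ln\varepsilon$ term and of your pole term.
\end{itemize}
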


\begin{remark}[Choice of the scaling]
The dimension-dependent scaling $\tau_\varepsilon^{(d)}$ is selected to
place the Minnaert eigenvalue in a moderate-frequency regime. This is the
regime in which the shrinking bubble produces a visible resonant response
inside a fixed bounded frequency window. Such an order-one resonance can
serve as a tunable spectral signature of the inclusion and is therefore
relevant for bubble-assisted detection and wave manipulation. In two
dimensions, the additional factor $|\ln\varepsilon|$ is needed to
compensate for the logarithmic low-frequency behaviour of the Green
function.
\end{remark}

Theorem~\ref{thm:main} lies at the intersection of two lines of
spectral perturbation theory. The first concerns classical perturbations by small inhomogeneities, small holes, and singular domain perturbations. In such problems, the small defect acts as a perturbation of a limiting background operator, and one studies how pre-existing spectral data are shifted by the defect; see, for instance, \cite{AK-04,ArrietaHaleHan1991,FLO,RauchTaylor1975}. The Dirichlet-frequency part of Theorem~\ref{thm:main} is of this type: the background Dirichlet eigenvalues persist as perturbed eigenvalue clusters.

The second line concerns singular perturbations that enlarge the limiting spectral picture by creating an additional finite-frequency branch. At the level of spectral structure, the closest comparison is the analysis of small Helmholtz resonators, where an
additional low-frequency eigenvalue can also be generated by a degenerating geometry \cite{CK-23,S-15}. This geometric mechanism has also been used in the analysis of acoustic metamaterials with frequency-dependent effective properties \cite{LS-17}. In that setting, a
resonator volume is connected to the exterior through a thin channel, and
the leading frequency is governed by the volume of the cavity, the length
of the neck, and the cross-sectional area of the channel.
The mechanism
in the present paper is fundamentally different. Here the geometry of the shrinking
inclusion remains simple, and the additional eigenvalue branch is created
by the singular material contrast in the transmission conditions. Consequently, Theorem \ref{thm:main} may be viewed as a material-contrast counterpart of the geometric singular limit for Helmholtz resonators. It simultaneously captures the perturbed cavity eigenvalues and the additional Minnaert branch and proves that these exhaust the spectrum in every prescribed bounded frequency window.

The proof relies on two complementary integral formulations that separately capture the Minnaert branch and the perturbed Dirichlet eigenfrequency clusters. Away from the Dirichlet spectrum, a Lippmann--Schwinger-type reduction based on the Dirichelt Green function identifies the singular contribution generating the Minnaert branch and reveals its dimension-dependent scaling. Near a Dirichlet eigenfrequency, where the Dirichlet Green function becomes singular, we instead use a free-space integral formulation that retains the outer boundary explicitly. Schur-complement reductions and uniform operator estimates, combined with analytic operator-valued function theory, then yield multiplicity-preserving eigenfrequency counts and rule out any additional branches.

The remainder of this paper is organized as follows. In
Section~\ref{section-2}, we introduce the notation and boundary integral
operators used throughout the paper. In Section~\ref{section-3}, we give the proof of the main theorem. In Section~\ref{sec:numerics}, we present two-dimensional numerical experiments supporting the theoretical results.

\section{Notation and boundary integral operators} \label{section-2}

Throughout the paper, let $d\in\{2,3\}$. For $k\in {\mathbb C_+}:=\{z\in \mathbb C: \mathrm{Re}(z)>0\}$, we denote by
$G(\cdot,\cdot;k)$ the outgoing fundamental solution of the Helmholtz operator
$\Delta+k^2$ in $\mathbb R^d$, namely
\begin{align*}
(\Delta_x+k^2)G(x,y;k)=-\delta_y(x).
\end{align*}
More explicitly,
\begin{equation}\label{free-space-green}
G(x,y;k)=
\begin{cases}
\dfrac{e^{ik|x-y|}}{4\pi |x-y|}, & d=3,\\[1.2ex]
\dfrac{i}{4}H_0^{(1)}(k|x-y|), & d=2.
\end{cases}
\end{equation}
In dimension $d=2$, the Hankel function $H_0^{(1)}$ is understood on its principal branch. Consequently, for every $x\neq y$, the mapping
$
k\longmapsto G(x,y;k)$
is analytic in $\mathbb C\setminus(-\infty,0]$. Equivalently, it admits an analytic continuation to the logarithmic Riemann surface of $\mathbb C\setminus\{0\}$. In dimension $d=3$, $G(x,y;k)$ is entire in $k$ for every $x\neq y$. It is known that the following low frequency expansions hold. For $d = 3$,
\begin{align*}
   G(x,y;k) = \frac{1}{4\pi |x-y|} \sum^{\infty}_{j=0} \frac{(ik|x-y|)^j}{j!}, 
\end{align*}
whereas, for $d=2$
\begin{align}\label{eq:49}
-G(x,y;k)
=\frac{1}{2\pi}\,\ln\!\big(k|x-y|\big)+\eta_0
+\sum_{j=1}^{\infty}\Big(b_j \ln\!\big(k|x-y|\big) + s_j\Big)\,(k|x-y|)^{2j},
\end{align}
where 
\begin{align*}
\eta_0=\frac{1}{2\pi}\,(\gamma-\ln 2)-\frac{i}{4},\qquad
b_j=\frac{(-1)^j}{2\pi\,2^{2j}(j!)^{2}},\qquad
s_j=b_j\!\left(\gamma-\ln 2-\frac{i\pi}{2}-\sum_{n=1}^{j}\frac{1}{n}\right),
\end{align*}
and $\gamma$ is the Euler constant. For a bounded measurable set $U\subset\mathbb R^d$ and a Lipschitz
surface, or curve when $d=2$, $\Sigma\subset\mathbb R^d$, we define the
free-space volume potential, single-layer potential, and adjoint Neumann--Poinca\'e operator by
\begin{align}
(\mathcal N_U^k g)(x)
&:=\int_U G(x,y;k)g(y)\,dy,
&& x \in \mathbb R^d, \label{free-volume-potential}\\
(\mathcal {SL}_\Sigma^k\varphi)(x)
&:=\int_\Sigma G(x,y;k)\varphi(y)\,d\sigma(y),
&& x\in \mathbb R^d\setminus \Sigma, \label{free-single-layer}\\
(\mathcal K_\Sigma^{k,*}\varphi)(x)
&:=\operatorname{p.v.}\int_\Sigma
\partial_{\nu_x}G(x,y;k)\varphi(y)\,d\sigma(y),
&& x\in \Sigma. \label{free-np}
\end{align}
Here $\nu$ denotes the chosen unit normal on $\Sigma$, and the normal
derivative in \eqref{free-np} is taken with respect to the $x$-variable. 

We shall also use the corresponding operators associated with the
Dirichlet Green function in the bounded cavity $D$. Let $k^2$ be outside
the Dirichlet spectrum of $-\Delta$ in $D$. The Dirichlet Green function
$G_D(\cdot,\cdot;k)$ is defined by
\begin{align}
(\Delta_x+k^2)G_D(x,y;k)&=-\delta_y(x),
&& x\in D, \label{dirichlet-green-equation}\\
G_D(x,y;k)&=0,
&& x\in \partial D. \label{dirichlet-green-bc}
\end{align}
Equivalently, one may write
\begin{equation}\label{eq:1}
G_D(x,y;k)=G(x,y;k) - H_D(x,y;k),
\end{equation}
where $H_D(\cdot,y;k)$ solves the homogeneous Helmholtz equation in $D$
with boundary data $G(\cdot,y;k)$ on $\partial D$.
For $U\Subset D$ and $\Sigma\Subset D$, we define the Dirichlet volume
potential, Dirichlet single-layer potential, and Dirichlet
Neumann--Poincar\'e operator by
\begin{align}
(\mathcal N_{D,U}^k g)(x)
&:=\int_U G_D(x,y;k)g(y)\,dy,
&& x\in D, \label{dirichlet-volume-potential}\\
(\mathcal {SL}_{D,\Sigma}^k\varphi)(x)
&:= \int_\Sigma G_D(x,y;k)\varphi(y)\,d\sigma(y),
&& x\in D\setminus \Sigma, \label{dirichlet-single-layer}\\
(\mathcal K_{D,\Sigma}^{k,*}\varphi)(x)
&:=\operatorname{p.v.}\int_\Sigma
\partial_{\nu_x}G_D(x,y;k)\varphi(y)\,d\sigma(y),
&& x\in \Sigma. \label{dirichlet-np}
\end{align}
By construction, the potentials $\mathcal N_{D,U}^k g$ and
$\mathcal {SL}_{D,\Sigma}^k\varphi$ satisfy the homogeneous Dirichlet
condition on $\partial D$.

Given $\vep > 0$ and $y_0 \in \R^3$, define
\begin{align} \label{eq:scal}
\Phi_\varepsilon(x;y_0):=y_0+\varepsilon(x-y_0).
\end{align}
For two Banach spaces $X$ and $Y$, denote the space of all linear bounded mapping from $X$ to $Y$ by $\mathcal L(X,Y)$. For simplicity, $\mathcal L(X,X)$ is also denoted by $\mathcal L(X)$. From now on, $\mathbb I$ denotes an identity operator in various spaces, and the constants may be different at different places.

\section{Proof of Theorem \ref{thm:main}} \label{section-3}

This section is devoted to the proof of Theorem \ref{thm:main}. The proofs of statements \eqref{a1} and \eqref{a2} are given in section \ref{sec:p1} and \ref{sec:p2}.

\subsection{Proof of statement (\ref{a1}) in Theorem \ref{thm:main}}\label{sec:p1}

We begin with the following observation: when $z \notin \Lambda_D$, any solution $u \in H^1_0(D)$ in \eqref{eq:7}--\eqref{eq:8} solves 
\begin{align*}
u(x) & = \alpha z^2 \int_{\Omega_\vep(y_0)} G(x,y;z/c_0) u(y)ds(y)\\
&- \beta^{(d)}_\vep \int_{\partial \Omega_\vep(y_0)}\frac{\rho_0}{\rho_1\tau^{(d)}_\vep}\partial_\nu u^-(y) G(x,y;z/c_0)ds(y), \quad x\in D \backslash \partial \Omega_\vep(y_0).
\end{align*}
Here, 
\begin{align} \label{eq:16}
\alpha:= \frac{1}{c^{2}_1} - \frac{1}{c^{2}_0},\qquad \beta^{(d)}_\vep:= 1 - \frac{\rho_1\tau^{(d)}_\vep}{\rho_0}.
\end{align}
Clearly, we find
\begin{align} \label{eq:9}
\begin{bmatrix}
&\mathbb I - z^2 \alpha \mathcal N^{z/c_0}_{D,\Omega_\vep(y_0)} & \beta^{(d)}_\vep  \mathcal  {SL}^{z/c_0}_{D,\partial \Omega_\vep(y_0)} \\
& - z^2 \alpha \partial_\nu  \mathcal N^{z/c_0}_{D,\Omega_\vep(y_0)} & \frac{\mathbb I} 2 +  \mathcal  K^{z/c_0,*}_{D,\partial \Omega_\vep(y_0)}  + \frac{\rho_1\tau^{(d)}_\vep}{\rho_0}\left(\frac{\mathbb I} 2 -  \mathcal K^{z/c_0,*}_{D,\partial \Omega_\vep(y_0)}\right)
\end{bmatrix} 
\begin{bmatrix}
u|_{\Omega_\vep(y_0)}\\
\frac{\rho_0}{\rho_1\tau^{(d)}_\vep}\partial_{\nu} u^-|_{\partial \Omega_\vep(y_0)}\\
\end{bmatrix} = \begin{bmatrix}
0\\
0
\end{bmatrix}. 
\end{align}
Since $\omega^{(d)}_M\notin\Lambda_D$, we may choose $\delta>0$ sufficiently small such that 
\begin{align} \label{eq:15}
\overline{B_\delta(\omega^{(d)}_M)}\cap\Lambda_D=\emptyset.
\end{align}
In what follows, we restrict the spectral parameter $z$ to $B_\delta(\omega_M)$.
By the Born series inversion method, it is easy to verify that
\begin{align*}
\mathbb I - z^2 \alpha \mathcal N^{z/c_0}_{D,\Omega_\vep(y_0)}
\end{align*}
is invertible in $\mathcal L(L^2(\Omega_\vep(y_0)))$ for all sufficiently small $\vep$. Therefore, it remains to investigate the injectivity of  
\begin{align*}
 \mathcal B^{\mathrm{sc}}_{\vep}(z):=\mathcal B_{\vep}(z) + z^2 \alpha \partial_\nu  \mathcal N^{z/c_0}_{D,\Omega_\vep(y_0)}\left(\mathbb I - z^2 \alpha \mathcal N^{z/c_0}_{D,\Omega_\vep(y_0)}\right)^{-1} \beta^{(d)}_\vep  \mathcal  {SL}^{z/c_0}_{D,\partial \Omega_\vep(y_0)}   
\end{align*}
in $L^2(\Omega_\vep(y_0)) \times L^2(\partial \Omega_\vep(y_0))$, where 
\begin{align*}
\mathcal B_{\vep}(z):=\frac{\mathbb I} 2 + \mathcal K^{z/c_0,*}_{D,\partial \Omega_\vep(y_0)}  + \frac{\rho_1 \tau^{(d)}_\vep}{\rho_0}\left(\frac{\mathbb I} 2 - \mathcal K^{z/c_0,*}_{D,\partial \Omega_\vep(y_0)}\right).
\end{align*}
Furthermore, building upon the decomposition \eqref{eq:1}, and pulling the operator back from
$\overline{\Omega_\varepsilon}$ to $\overline{\Omega}$ via the scaling transformation \eqref{eq:scal}, it can be deduced that the injectivity of $\mathcal B^{\mathrm{sc}}_\vep$ is equivalent to the injectivity of the operator below
\begin{align} \label{eq:2}
 &\frac{\mathbb I} 2 + \mathcal K_{D,\Gamma}^{\vep z/c_0,*}  + \mathcal K_{\vep,\mathrm{bod}}^{z} + \frac{\rho_1 \tau^{(d)}_\vep}{\rho_0}\left(\frac{\mathbb I} 2 -\mathcal K^{\vep z/c_0,*}_{D,\Gamma} - \mathcal K^{z}_{\vep,\mathrm{bod}}\right) + \mathcal M_\vep^{(0)}(z). 
\end{align}
where 
\begin{align}
&M_\vep^{(0)}(z):=\notag\\
&z^2 \alpha \left(\vep\partial_\nu \mathcal N_{D,\Omega}^{\vep z/c_0} + \mathcal N^{z,\mathrm{tra}}_{\vep,\mathrm{bod}}\right)\left(\mathbb I - z^2 \alpha(\vep^2\mathcal N_{D,\Omega}^{\vep z/c_0} + \mathcal N_{\vep,\mathrm{bod}}^{z})\right)^{-1}\beta^{(d)}_\vep \left[\vep\mathcal SL_{D,\Gamma}^{\vep z/c_0} + \mathcal SL_{\vep,\mathrm{bod}}^{z}\right]. \label{eq:50}
\end{align}
Here,
\begin{align*}
&\left(\mathcal K^{z}_{\vep,\mathrm{bod}} f \right)(x):= { -}\vep^{d-2} \int_{\Gamma}\nu(x)\cdot \nabla_x H_D(\Phi_\vep(x;y_0), \Phi_\vep(y;y_0)); z/c_0) f(y) ds(y), \quad x \in \Gamma.\\
&\left(\mathcal SL^{z}_{\vep,\mathrm{bod}} f \right)(x):= -\vep^{d-1} \int_{\Gamma}H_D(\Phi_\vep(x;y_0), \Phi_\vep(y;y_0)); z/c_0) f(y) ds(y), \quad\ x \in \Omega,\\
&\left(\mathcal N^{z}_{\vep,\mathrm{bod}} f \right)(x):= {-}\vep^{d} \int_{\Omega} H_D(\Phi_\vep(x;y_0), \Phi_\vep(y;y_0)); z/c_0) f(y) ds(y), \quad x \in \Omega,\\
&\left(\mathcal N^{z,\mathrm{tra}}_{\vep,\mathrm{bod}} f \right)(x):= {-}\vep^{d-1} \int_{\Omega} \nu(x)\cdot \nabla_x H_D(\Phi_\vep(x;y_0), \Phi_\vep(y;y_0)); z/c_0) f(y) ds(y), \quad x \in \Gamma.
\end{align*}
Now we collect two auxiliary results concerning those integral operators appear in formula \eqref{eq:2}. They are stated in the Lemma \ref{lem:equilibrium-projection} and Lemma \ref{le:1}, respectively.

\begin{lemma}\label{lem:equilibrium-projection}
Let
$\varphi_0$ be chosen as follows:
\begin{align*}
    \varphi_0 = \left(\mathcal S_\Gamma^0\right)^{-1}1 \quad \text{if } d=3,
\end{align*}
while, if $d=2$, we can define $\varphi_0$ analogously (see Remark \ref{re1} below for its construction ), and $\varphi_0$ forms a basis for
\begin{align*}
\ker\left(\frac12 \mathbb I + \mathcal K_{\Gamma}^{0,*}\right).
\end{align*}
Then, in both cases,
\begin{align*}
\left(\frac12 \mathbb I + \mathcal K_{\Gamma}^{0,*}\right)\varphi_0=0.
\end{align*}
Define
\begin{align}\label{eq:13}
\mathcal P\psi
:= \frac{(\psi,1)_{L^2(\Gamma)}}{(\varphi_0,1)_{L^2(\Gamma)}}\,\varphi_0, \qquad \mathcal Q:= I - \mathcal P.
\end{align}
Then $\mathcal P$ is a rank-one projection and
\begin{align*}
\mathcal P \mathcal K_{\Gamma}^{0,*} = \mathcal K_{\Gamma}^{0,*}\mathcal P = -\frac12\mathcal P.
\end{align*}
Consequently,
\begin{align} \label{eq:14}
 \mathcal T\left(\frac12 I + \mathcal K_0^*\right)
 =
 \left(\frac12 I + \mathcal K_0^*\right)\mathcal T,
 \qquad
 \mathcal T\in\{\mathcal P,\mathcal Q\}.
\end{align}
\end{lemma}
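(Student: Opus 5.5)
We first check that $\varphi_0$ lies in the kernel of $\frac12\mathbb I+\mathcal K_\Gamma^{0,*}$. For $d=3$ we set $w:=\mathcal{SL}_\Gamma^0\varphi_0$. Then $w$ is harmonic in $\mathbb R^3\setminus\Gamma$, continuous across $\Gamma$, and equal to $1$ on $\Gamma$. By uniqueness for the interior Dirichlet problem, $w\equiv1$ in $\Omega$, so $\partial_\nu w^-=0$ on $\Gamma$. The jump relation
\begin{align*}
\partial_\nu(\mathcal{SL}_\Gamma^0\varphi)^-=\Bigl(-\tfrac12\mathbb I+\mathcal K_\Gamma^{0,*}\Bigr)\varphi
\end{align*}
therefore gives $\mathcal K_\Gamma^{0,*}\varphi_0=\frac12\varphi_0$. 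The same identity also follows from the Calder\'on identity $\mathcal S_\Gamma^0\mathcal K_\Gamma^{0,*}=\mathcal K_\Gamma^0\mathcal S_\Gamma^0$ together with $\mathcal K_\Gamma^0 1=\frac12$.

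This requires fixing a sign convention. With the outward normal and the standard jumps, the density with constant interior potential lies in the kernel of $-\frac12\mathbb I+\mathcal K_\Gamma^{0,*}$. The statement therefore implicitly uses the opposite sign convention for $\mathcal K^{0,*}$, namely the one induced by the normal derivative convention in \eqref{eq:9}, where the exterior trace carries $\frac12+\mathcal K^*$. I will fix that convention once, so that $\frac12\mathbb I+\mathcal K^{0,*}_\Gamma$ is the interior-Neumann jump operator. With this convention, $\mathcal K^{0,*}_\Gamma\varphi_0=-\frac12\varphi_0$. For $d=2$ the construction of Remark~\ref{re1} (a density with constant single-layer potential on $\Gamma$) gives the same property by the identical argument. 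The kernel is one-dimensional by the standard Fredholm theory of $\mathcal K^{0,*}_\Gamma$ on $L^2(\Gamma)$ for $C^2$ boundaries, where the eigenvalue $\mp\frac12$ is simple because $\Gamma$ is connected. Hence $\varphi_0$ is a basis of the kernel.

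Next we show that $(\varphi_0,1)_{L^2(\Gamma)}\neq0$, so that $\mathcal P$ is well defined. In $d=3$ this quantity is the capacitance $\mathcal C_\Omega>0$, which is positive by positivity of $\mathcal S^0_\Gamma$. In $d=2$ the density of Remark~\ref{re1} is normalized to have total mass one, so the pairing is nonzero there as well. It follows that $\mathcal P$ is rank one, and $\mathcal P^2=\mathcal P$ because $\mathcal P\varphi_0=\varphi_0$.

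For the intertwining relations, the identity $\mathcal K^{0,*}_\Gamma\mathcal P=-\frac12\mathcal P$ is immediate from the eigenvalue relation, since $\mathcal P\psi$ is a multiple of $\varphi_0$. For $\mathcal P\mathcal K^{0,*}_\Gamma=-\frac12\mathcal P$ we need
\begin{align*}
(\mathcal K^{0,*}_\Gamma\psi,1)=(\psi,\mathcal K^0_\Gamma1)=-\tfrac12(\psi,1)\qquad\text{for all }\psi.
\end{align*}
This is Gauss's lemma $\mathcal K^0_\Gamma1=-\frac12$ in the chosen sign convention, which equivalently says that the constant $1$ spans the kernel of the adjoint $\frac12\mathbb I+\mathcal K^0_\Gamma$. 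The argument is valid in both dimensions, since Gauss's identity holds for $d=2$ as well.

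Finally, \eqref{eq:14} follows. For $\mathcal T=\mathcal P$, both compositions equal $\frac12\mathcal P-\frac12\mathcal P=0$. The case $\mathcal T=\mathcal Q=\mathbb I-\mathcal P$ then follows by linearity, because the identity commutes with every operator.

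The only delicate point is the sign and normalization bookkeeping: keeping the jump conventions consistent with \eqref{eq:9}, and ensuring that the two-dimensional $\varphi_0$ of Remark~\ref{re1} has nonzero mean and constant potential on $\Gamma$. Everything else is algebraic.
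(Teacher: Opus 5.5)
Your argument is correct in substance. The paper gives no proof of this lemma; it treats these facts as standard and uses them later. Your route is the standard reasoning the paper implicitly relies on: a constant interior potential has vanishing interior Neumann trace, Gauss's identity gives $\mathcal P\mathcal K_\Gamma^{0,*}=-\frac12\mathcal P$, you check $(\varphi_0,1)_{L^2(\Gamma)}\neq0$, and the rest is algebra.

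\textbf{The sign discussion is wrong.} It should be repaired, not resolved by ``fixing a convention''. Nothing is implicit here: by \eqref{free-space-green}, $G(\cdot,\cdot;0)$ is the \emph{positive} fundamental solution. It is $\frac{1}{4\pi|x-y|}$ for $d=3$, and $-\frac{1}{2\pi}\ln|x-y|$ for $d=2$ up to an additive constant that does not affect $\mathcal K^{0,*}_\Gamma$.
\begin{itemize}
\item For this kernel, $-\Delta\,\mathcal{SL}^0_\Gamma\varphi=\varphi\,\delta_\Gamma$ gives $\partial_\nu(\mathcal{SL}^0_\Gamma\varphi)^{\pm}=(\mp\frac12\mathbb I+\mathcal K_\Gamma^{0,*})\varphi$, and Gauss's identity reads $\mathcal K^0_\Gamma1=-\frac12$.
\item The relations in your first paragraph, namely $\partial_\nu(\mathcal{SL}^0_\Gamma\varphi)^-=(-\frac12\mathbb I+\mathcal K_\Gamma^{0,*})\varphi$, $\mathcal K_\Gamma^0 1=\frac12$ and hence $\mathcal K_\Gamma^{0,*}\varphi_0=\frac12\varphi_0$, belong to the negative kernel $-\frac{1}{4\pi|x-y|}$. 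They are false for this paper's operators, and the lemma holds exactly as written.
\item In \eqref{eq:9} it is the interior trace $\partial_\nu u^-$, not the exterior one, that produces $\frac12\mathbb I+\mathcal K^{*}$.
\item Your positivity argument for $\mathcal C_\Omega$ uses that $\mathcal S^0_\Gamma$ is positive definite. That holds only for the positive kernel; under your first-paragraph convention $\mathcal C_\Omega$ would be negative. So the two halves of the proposal were silently using opposite conventions.
\end{itemize}

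\textbf{Two smaller inaccuracies.}
\begin{itemize}
\item Simplicity of the kernel comes from connectedness of $\Omega$, which the paper assumes, not of $\Gamma$, which it does not. A kernel element is a density whose single-layer potential is constant on $\Omega$, and a connected $\Omega$ admits exactly one such constant. Connectedness of $\Gamma$ is what would matter for the exterior operator $-\frac12\mathbb I+\mathcal K^{0,*}_\Gamma$.
\item In $d=2$, the density finally fixed in Remark~\ref{re1} is $(\widehat{\mathcal S}^0_{\Gamma,k})^{-1}1=\varphi_0/(\gamma_0+\eta_k C_\Gamma)$. Its mass is $C_\Gamma/(\gamma_0+\eta_k C_\Gamma)$, not $1$. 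It is still nonzero, because every nonzero kernel element has nonzero mean:
\begin{itemize}
\item Suppose $\int_\Gamma\varphi=0$ and $\mathcal S^0_\Gamma\varphi\equiv c$ on $\overline\Omega$.
\item Then $\mathcal S^0_\Gamma\varphi$ is bounded and harmonic in $\mathbb R^2\setminus\overline\Omega$, equals $c$ on $\Gamma$, and is $O(|x|^{-1})$ at infinity.
\item Hence it is identically $c$ there, so $c=0$.
\item The jump relation then forces $\varphi=0$.
\end{itemize}
\end{itemize}
With these corrections the proof is complete.
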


\begin{remark} \label{re1}
In two dimensions, the static single-layer operator $S^0_\Gamma$ may fail to be
invertible. Due to the low-frequency expansion
\begin{align*}
G(x,y;k)
= - \frac{1}{2\pi}\ln(|x-y|) - \eta_k +
O\bigl(k^2|x-y|^2\ln(k|x-y|)\bigr),
\end{align*}
where
\begin{align}
\eta_k =
\frac{1}{2\pi}(\ln k+\gamma-\ln2)-\frac{i}{4},
\end{align}
the modified single-layer operator
\begin{align*}
\left(\widehat {\mathcal S}^0_{k,\Gamma} \psi\right)(x)
:= \left(\mathcal S^0_\Gamma \psi\right)(x) + \eta_k(\psi,1)_{L^2(\Gamma)}, \quad x\in \Gamma
\end{align*}
is invertible in $\mathcal L(L^2(\Gamma), H^1(\Gamma))$ (see Lemma B.1 in \cite{AZ18}). Let $\varphi_0$ be the equilibrium density introduced above and write
\begin{align*}
    \mathcal S_\Gamma^0 \varphi_0 = \gamma_0
    \quad\mathrm{on}\;\Gamma, \quad C_\Gamma:= (\varphi_0,1)_{L^2(\Gamma)}.
\end{align*}
Then, we set
\begin{align*}
    \widehat {\mathcal S}_{\Gamma,k}^0 \varphi_0
    =
    \gamma_0 + \eta_k C_\Gamma.
\end{align*}
Since $\gamma_0$ and $C_\Gamma$ are real, $C_\Gamma\neq0$, and
$\operatorname{Im}\eta_k=-1/4$, we have
\begin{align*}
\gamma_0 + \eta_k C_\Gamma\neq 0.
\end{align*}
Consequently,
\begin{align*}
    \left(\widehat {\mathcal S}^0_{\Gamma,k}\right)^{-1}1
    =
    \frac{\varphi_0}{\gamma_0+\eta_k C_\Gamma}.
\end{align*}
Thus $\left(\widehat {\mathcal S}^0_{\Gamma,k}\right)^{-1}1$ provides the two-dimensional analogue of the
three-dimensional quantity $\left(\mathcal S^0_\Gamma\right)^{-1}1$ up to a scalar normalization
of the same equilibrium density $\varphi_0$. For later use, in two dimensions, we set
\begin{align*}
    \varphi_0:= \left(\widehat {\mathcal S}^0_{\Gamma,k}\right)^{-1} 1.
\end{align*}
\end{remark}

\begin{lemma} \label{le:1}
Assume that $y_0 \in D $. Let $V$ be a fixed bounded open subset of the domain of analyticity
of $H_D(y_0,y_0,z)$ with respect to $z$, and let $\varepsilon>0$ be sufficiently small.
\begin{enumerate}[(a)]
\item \label{d1}
The operator-valued maps
\begin{align*}
z\longmapsto\mathcal K_{\varepsilon,\mathrm{bod}}^z\qquad
z\longmapsto\mathcal{SL}_{\varepsilon,\mathrm{bod}}^z,\qquad
z\longmapsto\mathcal \mathcal N_{\varepsilon,\mathrm{bod}}^z, \qquad z \longmapsto \mathcal N_{\vep,\mathrm{bod}}^{z,\mathrm{tra}}
\end{align*}
are analytic from $V$ into $\mathcal L(L^2(\Gamma))$, $\mathcal L(L^2(\Gamma),L^2(\Omega))$, $\mathcal L(L^2(\Omega))$ and $\mathcal L(L^2(\Omega),L^2(\Gamma))$, respectively. Moreover, for
every compact set $K\Subset V$, there exists a constant $C_K>0$,
independent of $\varepsilon$, such that
\begin{align}\label{eq:41}
&\sup_{z\in K}
\left\|\mathcal K_{\varepsilon,\mathrm{bod}}^z
\right\|_{\mathcal L(L^2(\Gamma))}
\leq C_K\varepsilon^{d-1},\\
&\sup_{z\in K}
\left\| \mathcal SL_{\varepsilon,\mathrm{bod}}^z
\right\|_{\mathcal L(L^2(\Gamma),L^2(\Omega))}
\leq C_K\varepsilon^{d-1},\label{eq:54}\\
&\sup_{z\in K}
\left\|\mathcal N_{\varepsilon,\mathrm{bod}}^z
\right\|_{\mathcal L(L^2(\Omega))}
\leq C_K\varepsilon^{d},\\
&\sup_{z\in K}
\left\|\mathcal N_{\varepsilon,\mathrm{bod}}^{z,\mathrm{tra}}
\right\|_{\mathcal L(L^2(\Omega),L^2(\Gamma))}
\leq C_K\varepsilon^{d}. \label{eq:55}
\end{align}

\item \label{d2} For $g \in L^2(\Gamma)$, we have that
\begin{align}\label{eq:5}
&\left(\mathcal K_{\vep, \mathrm{bod}}^{z} g, 1\right)_{L^2(\Gamma)} =  {\vep^d}\frac{z^2}{c^2_0}H_{D}(y_0, y_0; z/c_0)|\Omega|\left(g,1\right)_{L^2(\Gamma)} + O(\vep^{d+1}),\\
&\left(\mathcal SL_{\vep, \mathrm{bod}}^{z} g, 1\right)_{L^2(\Omega)} = -  {\vep^{d-1}}H_{D}(y_0, y_0; z/c_0)|\Omega|\left(g,1\right)_{L^2(\Gamma)} + O(\vep^{d}), \label{eq:57}
\end{align}
{as} $\vep \rightarrow 0$.
\end{enumerate}
\end{lemma}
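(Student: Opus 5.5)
The plan is to work directly with the kernels after the change of variables. By \eqref{eq:scal},
\begin{align*}
H_D\bigl(\Phi_\vep(x;y_0),\Phi_\vep(y;y_0);z/c_0\bigr), \qquad x,y\in\overline\Omega ,
\end{align*}
is $H_D$ evaluated at points of the small set $\overline{\Omega_\vep(y_0)}\Subset D$. Everything rests on $H_D(\cdot,\cdot;k)$ being jointly smooth on compact subsets of $D\times D$ and analytic in $k$.

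To see this, note that $H_D(\cdot,y;k)$ solves the Dirichlet problem for $\Delta+k^2$ in $D$ with boundary data $G(\cdot,y;k)|_{\partial D}$. For $y$ in a compact $K_0\Subset D$ this data is smooth in $y$ and analytic in $k$, since $|x-y|\ge \mathrm{dist}(K_0,\partial D)>0$ on $\partial D$. Off the Dirichlet spectrum the solution operator is analytic in $k$, and $V$ is taken inside the analyticity domain. Interior elliptic regularity, applied in $x$ and also in $y$ through the symmetry $G_D(x,y)=G_D(y,x)$ and hence $H_D(x,y)=H_D(y,x)$, then gives $H_D\in C^\infty(K_0\times K_0)$. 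Its derivatives are bounded uniformly for $z$ in any compact $K\Subset V$, and analytic in $z$.

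For part (a), each operator in the statement is an integral operator with a smooth kernel multiplied by an explicit power of $\vep$. The kernel is $H_D(\Phi_\vep x,\Phi_\vep y)$, or its gradient in the first argument. The chain rule gives $\nabla_x[H_D(\Phi_\vep x,\cdot)]=\vep(\nabla_1 H_D)(\Phi_\vep x,\cdot)$.

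Read this way, the $\nabla_x$ in $\mathcal K^z_{\vep,\mathrm{bod}}$ and in $\mathcal N^{z,\mathrm{tra}}_{\vep,\mathrm{bod}}$ is the derivative with respect to the pulled-back variable. It supplies one extra factor of $\vep$, so the prefactors $\vep^{d-2}$ and $\vep^{d-1}$ become $\vep^{d-1}$ and $\vep^{d}$. The single-layer and volume operators need no derivative and already carry $\vep^{d-1}$ and $\vep^d$.

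A Schur/Hilbert–Schmidt bound then gives the operator norms on $L^2(\Gamma)$ and $L^2(\Omega)$, since $|\Gamma|$ and $|\Omega|$ are fixed. This yields \eqref{eq:41}–\eqref{eq:55} with constants depending only on $\sup_K\|H_D\|_{C^1(K_0\times K_0)}$. Analyticity in $z$ follows because the kernels depend analytically on $z$ uniformly in $(x,y)$, so the operator-valued maps are analytic in norm, for instance by Morera's theorem or by differentiating under the integral.

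For part (b), I will Taylor expand the kernel about $(y_0,y_0)$, using $\Phi_\vep x-y_0=\vep(x-y_0)$.

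\textbf{Single layer, \eqref{eq:57}.} Here $H_D(\Phi_\vep x,\Phi_\vep y)=H_D(y_0,y_0)+O(\vep)$ uniformly. Pairing with $1$ on $\Omega$ gives
\begin{align*}
-\vep^{d-1}|\Omega|\,H_D(y_0,y_0;z/c_0)\,(g,1)_{L^2(\Gamma)}+O(\vep^d).
\end{align*}

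\textbf{Double layer, \eqref{eq:5}.} This term needs more care, because the leading $O(\vep^{d-1})$ term must vanish. By Fubini,
\begin{align*}
(\mathcal K^z_{\vep,\mathrm{bod}}g,1)=-\vep^{d-2}\int_\Gamma g(y)\int_\Gamma \partial_{\nu_x}\bigl[H_D(\Phi_\vep x,\Phi_\vep y)\bigr]\,ds(x)\,ds(y).
\end{align*}
Apply the divergence theorem to the inner integral over $\Gamma=\partial\Omega$. It becomes $\int_\Omega\Delta_x[H_D(\Phi_\vep x,\Phi_\vep y)]\,dx$. Since $\Delta H_D+k^2H_D=0$ with $k=z/c_0$, this equals
\begin{align*}
\vep^2\int_\Omega(\Delta_1H_D)(\Phi_\vep x,\Phi_\vep y)\,dx=-\vep^2\frac{z^2}{c_0^2}\int_\Omega H_D(\Phi_\vep x,\Phi_\vep y)\,dx .
\end{align*}
Expanding $H_D$ at $(y_0,y_0)$ to order $O(\vep)$ gives $\vep^d\frac{z^2}{c_0^2}H_D(y_0,y_0)|\Omega|(g,1)+O(\vep^{d+1})$, with the minus signs combining to $+$.

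The main obstacle is the uniform-in-$z$ smoothness of $H_D$ near the diagonal point $(y_0,y_0)$. I would settle it, as above, by writing $H_D$ as the solution of a Dirichlet problem with smooth data and invoking the analyticity of the Dirichlet resolvent on $V$. The rest is bookkeeping of the powers of $\vep$. The one genuinely non-obvious point is the divergence-theorem cancellation in \eqref{eq:5}, which gains the extra factor $\vep^2$.
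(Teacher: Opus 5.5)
Your proposal is correct and follows the paper's proof. For (a) you use the chain-rule gain $\nabla_x[H_D(\Phi_\varepsilon(x;y_0),\cdot)]=\varepsilon\nabla_1H_D$, Cauchy--Schwarz bounds on the bounded kernels, and differentiation under the integral sign; for (b) you use the expansion $H_D(\Phi_\varepsilon(x;y_0),\Phi_\varepsilon(y;y_0);z/c_0)=H_D(y_0,y_0;z/c_0)+O(\varepsilon)$, together, for \eqref{eq:5}, with the divergence theorem and $(\Delta_x+\varepsilon^2z^2/c_0^2)H_D(\Phi_\varepsilon(x;y_0),\Phi_\varepsilon(y;y_0);z/c_0)=0$ in $\Omega$.

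You are somewhat more careful than the paper: you justify the joint smoothness and analyticity of $H_D$ that it only asserts, and you keep the Fubini step exact before expanding. You also rightly read $V$ as avoiding the Dirichlet frequencies; this is what the application needs, and it is what makes the kernels themselves, not just $H_D(y_0,y_0;\cdot)$, analytic.
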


\begin{proof}
We first prove statement~\eqref{d1}. 
For sufficiently small $\varepsilon$, the function
\begin{align*}
z\longmapsto
H_D\bigl(
\Phi_\varepsilon(x;y_0),
\Phi_\varepsilon(y;y_0);
z/c_0
\bigr)
\end{align*}
is analytic on $U$ for every $x,y\in\Gamma$. Moreover, $H_D$ and
its derivatives are uniformly bounded in the spatial variables for
$z$ in compact subsets of $V$.
By the chain rule,
\begin{align*}
\nabla_x
H_D\bigl(
\Phi_\varepsilon(x;y_0),
\Phi_\varepsilon(y;y_0);
z/c_0
\bigr)
=
\varepsilon
\nabla_1 H_D\bigl(
\Phi_\varepsilon(x;y_0),
\Phi_\varepsilon(y;y_0);
z/c_0
\bigr),
\end{align*}
where $\nabla_1$ denotes differentiation with respect to the first
spatial variable. Hence, for every compact set $K\Subset V$,
\begin{align*}
\sup_{\substack{x,y\in\Gamma\\ z\in K}}
\left|
\nabla_1 H_D\bigl(
\Phi_\varepsilon(x;y_0),
\Phi_\varepsilon(y;y_0);
z/c_0
\bigr)
\right|
\leq C_K.
\end{align*}
It follows from the definition of
$\mathcal K_{\varepsilon,\mathrm{bod}}^z$ and the
Cauchy--Schwarz inequality that \eqref{eq:41} holds.
Since the kernel is analytic in $z$ and locally uniformly bounded,
differentiation under the integral sign shows that
\begin{align*}
z\longmapsto
\mathcal K_{\varepsilon,\mathrm{bod}}^z
\end{align*}
is analytic as an
$\mathcal L(L^2(\Gamma))$-valued function. 

The remaining analyticity assertions of statement \eqref{d1} and the estimates \eqref{eq:54}--\eqref{eq:55} follow by the same argument.

We now prove statement \eqref{d2}. It is easy to verify that as $\vep \rightarrow 0$,
\begin{align}
&H_D(\Phi_\vep(x;y_0), \Phi_\vep(y;y_0); z/c_0) =   H_D(y_0,y_0; z/c_0) + O(\vep).\label{eq:3}
\end{align}
Furthermore, since
\begin{align*}
  \left(\Delta_x + \vep^2z^2/c_0^2\right)  H_D(\Phi_\vep(x;y_0),\Phi_\vep(y;y_0);z/c_0) = 0\quad \mathrm{in}\; \Omega\;\; \textrm{for each fixed }\; y,
\end{align*}
we have 
\begin{align}
& \left(\mathcal K_{\vep, \mathrm{bod}}^{z} g, 1\right)_{L^2(\Gamma)} \\
&= {-} \vep^{d-2}\int_{\Gamma}\int_\Gamma \nu(x) \cdot \nabla_x H_D(\Phi_\vep(x;y_0),\Phi_\vep(y;y_0);z/c_0) g(y) ds(y) ds(x) \notag\\
& = \vep^d \int_{\Omega}\frac{z^2}{c^2_0}H_{D}(\Phi_\vep(x;y_0),\Phi_\vep(y;y_0);z/c_0) dx \int_{\Gamma} g(y)ds(y). \notag
\end{align}
This, together with \eqref{eq:3} yields the asymptotic expansion \eqref{eq:5}.

Furthermore, using \eqref{eq:3} again, it can be seen that \eqref{eq:57} holds.
\end{proof}

Now we are ready to show statement (\ref{a1}) in Theorem \ref{thm:main}.

\begin{proof}[Proof of statement (\ref{a1}) in Theorem \ref{thm:main}]

By a straightforward calculation, we have: for any $\phi \in L^2(\Gamma)$,
\begin{align*}
\left\langle\partial_\nu \mathcal N_{D,\Omega}^{\vep z/c_0}\mathcal SL_{D,\Gamma}^{\vep z/c_0} \phi, 1\right\rangle_{L^2(\Gamma)} & =  -\frac{\vep^2z^2}{c^2_0}\int_\Omega \left(N_{D,\Omega}^{\vep z/c_0} SL_{D,\Gamma}^{\vep z/c_0} \phi\right) (y)dy \\
& + \frac{c^2_0}{\vep^2 z^2}\left\langle \frac{1}{2}\phi+ K^{\vep z/c_0,*}_{D,\Gamma} \phi, 1\right\rangle_{L^2(\Gamma)}
\end{align*}
and
\begin{align*}
\left\langle\partial_\nu \mathcal N_{D,\Omega}^{\vep z/c_0}\mathcal SL_{\vep,\mathrm{bod}}^{z}\phi, 1\right\rangle_{L^2(\Gamma)} = \int_\Omega  -\frac{\vep^2z^2}{c^2_0}\left(N_{D,\Omega}^{\vep z/c_0} \mathcal SL_{\vep,\mathrm{bod}}^{z}\phi\right)(y) - \left(\mathcal SL_{\vep,\mathrm{bod}}^{z}\phi\right)(y)dy.
\end{align*}
This, together with \eqref{eq:49}, \eqref{eq:14} and statement \eqref{d1} of Lemma \ref{le:1} yields the following the decomposition of the operator $M_\vep^{(0)}(z)$, given by \eqref{eq:50}:
\begin{align*}
 M_\vep^{(0)}(z) = c_0^2\alpha \mathcal P \left(\frac{1}{2} + K^{\vep z/c_0,*}_{D,\Gamma}\right) + \mathcal M_{\vep}^{(1)}(z) + \mathcal M_{\vep}^{(2)}(z) +  \mathcal M_{\vep,\mathrm{res}}^{(3)}(z),
\end{align*}
where $\mathcal M_{\vep}^{(1)}$ and $\mathcal M_{\vep}^{(2)}$ are defined by 
\begin{align} \label{eq:47}
  \mathcal M_{\vep}^{(1)}(z):= -\vep \alpha z^2 \frac{\langle SL_{\vep,\mathrm{bod}}^{z}\phi,1\rangle_{L^2(\Omega)}}{(\varphi_0,1)_{L^2(\Gamma)}}\,\varphi_0,\qquad \mathcal M_{\vep}^{(2)}(z):=\vep^2 \alpha \mathcal Q \partial_\nu \mathcal N_{D,\Omega}^{\vep z/c_0} \mathcal SL_{D,\Gamma}^{\vep z/c_0},
\end{align}
and $\mathcal M_{\vep, \mathrm{res}}^{(2)}$ satisfies
\begin{align}\label{eq:48}
\left\|\mathcal M_{\vep,\mathrm{res}}^{(2)}(z)\right\|_{\mathcal L(L^2(\Gamma))} \le C \left(\tau^{(d)}_\vep\right)^2.
\end{align}

On the other hand, by the representation of \eqref{eq:2}, it follows that a number $\lambda$ is the eigenvalue of \eqref{eq:7}--\eqref{eq:8}, if and only if there exists $g_\vep \in L^2(\Gamma)$ such that 
\begin{equation}\label{original-characteristic-equation}
\left[\frac{\rho_1 \vep^2}{\rho_0}\mathbb I  + \beta^{(3)}_\vep \left(\frac{\mathbb I}2 + \mathcal K_{D,\Gamma}^{\vep z/c_0,*}  + \mathcal K_{\vep,\mathrm{bod}}^{z}\right) + M_\vep^{(0)}(z)\right]g_\vep = 0 \quad \textrm{on}\; \Gamma.
\end{equation}
With the decomposition $\psi_\vep = \mathcal P \psi_\vep + \mathcal Q \psi_\vep$, \eqref{original-characteristic-equation} is equivalent to 
\begin{align} \label{eq:137}
\mathcal F(z(\vep)) g_\vep = \mathcal P g_\vep, 
\end{align}
where $\mathcal F(z(\vep))$ is defined by
\begin{align} \label{eq:142}
\mathcal F(z(\vep)) := \frac{\rho_1 \vep^2}{\rho_0}\mathbb I + \mathcal P + \beta^{(3)}_\vep\left(\frac{\mathbb I}2 + \mathcal K_{D,\Gamma}^{\vep z/c_0,*} + \mathcal K_{\vep,\mathrm{bod}}^{z}\right) + M_\vep^{(0)}(z).
\end{align}
Here, the projection operator is given by \eqref{eq:13}.
With the aid of the fact that $\mathcal P + {\mathbb I}/2 + \mathcal K_{D,\Gamma}^{0,*}$  has an inverse in $\mathcal L(L^{2}(\Gamma))$, we have 
\begin{align} 
\left\|\left(\mathcal F(z(\vep))\right)^{-1} \right\|_{\mathcal L \left(L^{2}(\Gamma)\right)} \le C.\notag
\end{align}
Thus, \eqref{eq:137} can be rewritten as 
\begin{align*}
\mathcal Q g_\vep = \left(\mathcal F(z(\vep))\right)^{-1} \mathcal P g_\vep - \mathcal P g_\vep.
\end{align*}
This, together with the definition of $\mathcal P$ and $\mathcal Q$ gives 
\begin{align} \label{eq:141}
\langle\left(\mathcal F(z(\vep))\right)^{-1} \mathcal P g_\vep, 1\rangle_{L^2(\Gamma)} =\langle \mathcal P g_\vep, 1\rangle_{L^2(\Gamma)}.
\end{align}
Setting 
\begin{align}\label{def-l}
l_\vep:= \left(\mathcal F(z(\vep))\right)^{-1} \varphi_0,
\end{align}
then we rewrite \eqref{eq:141} as
\begin{align}\label{l-psi-0}
\langle l_\vep, 1\rangle_{L^2(\Gamma)}
= 1.
\end{align}
In conjunction with \eqref{eq:142}, \eqref{def-l} and \eqref{l-psi-0}, we arrive at 
\begin{align}
\left[\frac{\rho_1 \vep^2}{\rho_0}\mathbb I + \beta^{(3)}_\vep\left(\frac{\mathbb I}2 + \mathcal K_{D,\Gamma}^{\vep z/c_0,*} + \mathcal K_{\vep,\mathrm{bod}}^{z}\right) + M_\vep^{(0)}(z)\right] l_\vep = 0. \label{eq:51}
\end{align}

The rest of the proof is divided into two cases: the first case involves three dimensional case, and the second case addresses two dimensional case.

\medskip
\noindent\textbf{Case 1: Three-dimensional case.}
Combining \eqref{eq:47}, \eqref{eq:48}, \eqref{eq:51} and the asymptotics of $\mathcal K_{D,\Gamma}^{z,*}$ around $0$ gives
\begin{align}
0 = \left[\frac{\rho_1 \vep^2}{\rho_0} \mathbb I + \left(\beta^{(3)}_\vep \mathbb I + \alpha c_0^2\mathcal P\right)\left(\frac 12 + \mathcal K_{D,\Gamma}^{0,*} + \mathcal K^{z}_{\vep,\mathrm{bod}} + \frac{(\vep z)^2}{c^2_0}\mathcal K_{D,\Gamma,2}^{0} \right) + \mathcal M^{(2)}_\vep(z) + \mathcal R^{(1)}_{\textrm{res},\vep}(z)\right]l_\vep,\label{eq:143} 
\end{align}
where 
\begin{align}\label{eq:144}
\left\|\mathcal R^{(1)}_{\textrm{res},\vep}(z)\right\|_{\mathcal L\left(L^2(\Gamma)\right)} \le C \vep^3.
\end{align}
Here, 
\begin{align*}
\mathcal K_{D,\Gamma,2}^{0}:= \frac{-1}{8\pi } \int_{\Gamma}|x-y|^{-1}(x-y)\cdot \nu(x) \phi(y) d\sigma(y).
\end{align*}
With the aid of \eqref{eq:14} and \eqref{eq:143}, we have 
\begin{align}
&\left(\frac{1}2 +\mathcal K_{D,\Gamma}^{0,*}\right) \mathcal Q l_\vep + \mathcal Q \widetilde R^{(1)}_{\textrm{res},\vep}(z) \mathcal Ql_\vep + \mathcal M_\vep^{(2)}(z)\mathcal Q l_\vep = -\mathcal Q \widetilde R^{(1)}_{\textrm{res},\vep}(z) \mathcal P l_\vep - \mathcal M_\vep^{(2)}(z)\mathcal P l_\vep , \label{eq:44}\\
&\mathcal P \widetilde R^{(1)}_{\textrm{res},\vep}(z) \mathcal Pl_\vep + \mathcal P \widetilde R^{(1)}_{\textrm{res},\vep}(z) \mathcal Ql_\vep = 0. \label{eq:45}
\end{align}
Here,
\begin{align*}
 \widetilde R^{(1)}_{\textrm{res},\vep}:=   \frac{\rho_1 \vep^2}{\rho_0}\mathbb I-\frac{\rho_1 \vep^2}{\rho_0}\left(\frac{1}2 + \mathcal K_{D,\Gamma}^{0,*}\right) + \left(1-\frac{\rho_1 \vep^2 }{\rho_0}\right)\left(\frac{z^2\vep^2}{ c^2_0} \mathcal K^{0}_{D,\Gamma,2} + \mathcal K^{z}_{\vep,\mathrm{bod}}+ \mathcal R^{(1)}_{\textrm{res},\vep}(z)\right).
\end{align*}
It follows from \eqref{eq:41}, \eqref{eq:5} and  \eqref{eq:144} that \begin{align}\label{eq:146}
\left\|\mathcal Q\widetilde R^{(1)}_{\textrm{res},\vep}(z)\right\|_{\mathcal L(L^2(\Gamma))} \le C \vep^2, \qquad\left\|\mathcal P\widetilde R^{(1)}_{\textrm{res},\vep}(z)\right\|_{\mathcal L(L^2(\Gamma))} \le C \vep^2.
\end{align}
Furthermore, with the aid of the fact that $\left(1/2 + \mathcal K_{D,\Gamma}^{0,*}\right)\mathcal Q$ is invertible in $\mathcal L(\mathcal Q(L^2(\Gamma)))$, we have 
\begin{align*}
\left\|\left[\left(\frac{1}2 +\mathcal K_{D,\Gamma}^{0,*}\right) \mathcal Q + \mathcal Q \widetilde R^{(1)}_{\textrm{res},\vep}(z) \mathcal Q + \mathcal M_\vep^{(2)}(z)\mathcal Q \right]^{-1}\right\|_{\mathcal L(L^2(\Gamma))} \le C.  
\end{align*}
Combining this with \eqref{eq:5} and \eqref{eq:146}, and applying
Lemma~\ref{lem:equilibrium-projection}, we conclude that
\eqref{eq:44}--\eqref{eq:45} admit a nontrivial solution if and only if
\begin{align} \label{eq:46}
\left(\frac{\vep^2 \rho_1}{\rho_0} + \frac{z^2\vep^2}{c^2_1} \langle K^{0}_{D,\Gamma,2} \varphi_0, 1\rangle_{L^2(\Gamma)} + \mathcal R^{(2)}_{\textrm{res},\vep}(z)\right) \varphi_0  = 0, 
\end{align}
where $\mathcal R^{(2)}_{\textrm{res},\vep}$ satisfies
\begin{align*}
\left\|\mathcal R^{(2)}_{\textrm{res},\vep}(z)\right\|_{L^2(\Gamma)} \le C \vep^3.
\end{align*}
In conjunction with the identity (\cite{LS-04}[Lemma 2.6])
\begin{align*}
\frac{1}{8\pi}\int_{\Gamma} \int_{\Gamma} \frac{\nu(x)\cdot (x-y)}{|x-y|} \left(S^{-1}_01\right)(y) d\sigma(y)d\sigma(x) = |\Omega|,
\end{align*}
we arrive at 
\begin{align}\label{eq:148}
\frac{\rho_1\vep^2}{\rho_0} - \frac{z^2 \vep^2} {\mathcal C_\Omega c_1^2}|\Omega| + \mathcal R^{(3)}_{\textrm{res},\vep}(z) = 0,
\end{align}
where $\mathcal R^{(3)}_{\textrm{res},\vep}$ satisfies
\begin{align} \label{eq:149}
\left|\mathcal R^{(3)}_{\textrm{res},\vep}(z)\right| \le C\vep^3.
\end{align}
Dividing by the constant $\omega_M^2|\Omega|\mathcal C^{-1}_\Omega c^{-2}_1$ on both sides of \eqref{eq:148}, we end up with the following characteristic equation for estimating the resonance:
\begin{align}\label{Final-characteristic-equation}
\vep^2 - \frac{\vep^2 z^2}{\omega^2_M} + \frac{R^{(3)}_{\textrm{res},\vep}\mathcal C_\Omega c^2_0}{\omega^2_M|\Omega|} =0.
\end{align}
Note that \(\mathcal R_{\mathrm{res},\varepsilon}^{(3)}\) satisfies
estimate~\eqref{eq:149} and, by statement \eqref{d1} of Lemma~\ref{le:1}, depends analytically on
$z$. Rouché's theorem therefore shows that \eqref{Final-characteristic-equation} admits a unique zero in $\{s\in I: |s-\omega_M^{(3)}|<\delta \}$, which admits the following asymptotic expansion:
\begin{align*}
z(\vep)= \omega^{(3)}_M + z_{res}(\vep)
\end{align*}
with 
\begin{align*}
\left|z_{res}(\vep)\right| \le C\vep, \quad \mathrm{as}\; \vep \rightarrow 0.
\end{align*}
This shows that  the asymptotic expansion \eqref{eq:asy} holds for the case of $d = 3$. 

\medskip
\noindent\textbf{Case 2: Two-dimensional case.}
We now consider the two-dimensional case. The argument follows the similar strategy as in the three-dimensional case. The only essential modifications comes from the logarithmic low-frequency behaviour of the two-dimensional Green function.

In conjunction with \eqref{eq:49} and \eqref{eq:51}, we have the following two-dimensional
analogue of \eqref{eq:143},
\begin{align*}
0 &= \bigg[\frac{\rho_1 \vep^2|\ln \vep|}{\rho_0} \mathbb I + \left(\beta^{(2)}_\vep + \alpha c_0^2 \mathcal P\right)\bigg(\frac 12 +  \mathcal K_{D,\Gamma}^{0,*}  + \mathcal K^{z}_{\vep,\mathrm{bod}} + \frac{(\vep z)^2}{c^2_0} \mathcal K_{D,\Gamma,1, 2}^{0}
\\
& + \frac{(\vep z)^2}{c^2_0}\ln(\vep z/c_0)\mathcal K_{D,\Gamma,2,2}^{0} \bigg) + \mathcal M_\vep^{(1)}(z) + \mathcal M^{(2)}_{\vep}(z) + \mathcal R^{(1)}_{\textrm{res},\vep}(z) \bigg]l_\vep,
\end{align*}
where 
\begin{align*}
    \left\|\mathcal R^{(1)}_{\textrm{res},\vep}\right\|_{\mathcal L\left(L^2(\Gamma)\right)} \le C \vep^3 \ln \vep.
\end{align*}
Here,
\begin{align*}
&\left(\mathcal K_{D,\Gamma,1, 2}^{0}\phi\right)(x)
= {-}\int_{\Gamma} \frac{\partial}{\partial \nu(x)}
\!\left(|x-y|^{2}\big(b_1\ln|x-y|+s_1\big)\right)\phi(y)\,d\sigma(y)\\
&\left(\mathcal K_{D,\Gamma,2, 2}^{0}\phi\right)(x)
= {-}\int_{\Gamma} b_1\,\frac{\partial |x-y|^{2}}{\partial \nu(x)}\,\phi(y)\,d\sigma(y).
\end{align*}
Proceeding as in the derivation of \eqref{eq:46}, and applying Lemma \ref{lem:equilibrium-projection}, we have 
\begin{align}
&\bigg(\frac{\vep^2  |\ln \vep| \rho_1}{\rho_0}\langle\varphi_0,1 \rangle_{L^2(\Gamma)} + \frac{(z \vep)^2}{c^2_1} \langle \mathcal K_{D,\Gamma,1, 2}^{0} \varphi_0, 1\rangle_{L^2(\Gamma)} + \frac{(z \vep)^2}{c^2_1}\ln(z\vep/c_0) \langle \mathcal K_{D,\Gamma,2, 2}^{0}\varphi_0, 1\rangle_{L^2(\Gamma)} \notag\\
& +  \langle K^{z}_{\vep,\mathrm{bod}} \varphi_0, 1\rangle_{L^2(\Gamma)}+ \vep \alpha z^2 \langle SL_{\vep,\mathrm{bod}}^{z}\varphi_0,1\rangle_{L^2(\Omega)}  + \mathcal R^{(2)}_{\textrm{res},\vep}(z)\bigg) = 0, \label{eq:6}
\end{align}
where
\begin{align*}
\left|\mathcal R^{(2)}_{\textrm{res},\vep}\right| \le C\vep^3\ln\vep.
\end{align*}
We note that 
\begin{align*}
\left(\mathcal K_{D,\Gamma,2, 2}^{0}\right)^{*}[1](x)
&= {- 4}\,\overline{b}_1\,|\Omega|;\\
\left(\mathcal K_{D,\Gamma,1, 2}^{0}\right)^{*}[1](x)
& = {-}\left({4}\,\overline{b}_1 + 4\,\overline{s}_1\right)|\Omega|
 \; { -\; 4}\,\overline{b}_1 \int_{\Omega}\ln|x-y|\,dy,
\end{align*}
and that 
\begin{align*}
\left\langle\varphi_0,\;\frac{1}{2\pi}\int_{\Omega}\ln|x-y|\,dy\right\rangle_{L^2(\Gamma)}
= -\gamma_0 |\Omega|.
\end{align*}
Combining the above three identities with  \eqref{eq:5}, \eqref{eq:57} and \eqref{eq:6}, we have 
\begin{align*}
&\frac{\vep^2 |\ln \vep| \rho_1}{\rho_0}=    \frac{(z\vep)^2}{c^2_1} \bigg[\big( {4}\,{b}_1+4\,{s}_1\big)|\Omega| {-} \frac{4b_1 2 \pi \gamma_0|\Omega|}{\langle\phi_0,1 \rangle_{L^2(\Gamma)} } \\ 
& {-} H_D(y_0,y_0;z/c_0)\frac{c_0^2}{c_1^2}|\Omega|\bigg] +  \frac{(z\vep)^2}{c^2_1}\ln(\vep z/c_0)  4\,{b}_1\,|\Omega| - \mathcal R^{(2)}_{\textrm{res},\vep}.
\end{align*}
From this, using Rouché's theorem again, we readily obtain that the interval $\{s\in I: |s-\omega_M^{(2)}|<\delta \}$ contains a unique eigenfrequency admitting the asymptotic expansion \eqref{eq:asy} holds for the case of $d = 2$. 
\end{proof}

\begin{remark}
The leading-order two-dimensional Minnaert frequency is obtained by
retaining only the dominant logarithmic contribution. Since the convergence
rate is only of order $|\ln\varepsilon|^{-1}$, the leading formula may
not be sufficiently accurate for moderate values of $\varepsilon$. For
this reason, in the numerical experiments we can also compare the computed
eigenfrequency with the root of a refined characteristic equation which
keeps the regular $O(1)$ terms in the two-dimensional expansion.

More precisely, set
\begin{align*}
A_M^{(2)}
:=
\big({4}b_1+4s_1\big)|\Omega|
+
\frac{4b_1\,2\pi\gamma_0|\Omega|}
{\langle\phi_0,1\rangle_{L^2(\Gamma)}}
{-}
H_D\left(y_0,y_0;\omega_M^{(2)}/c_0\right) \frac{c_0^2}{c_1^2} |\Omega|,
\end{align*}
where
\begin{align*}
b_1=-\frac{1}{8\pi},
\qquad
s_1=-\frac{1}{8\pi}
\left(\gamma-\ln2-1-\frac{i\pi}{2}\right), \qquad \gamma_0 = {S_\Gamma^0} \left(\widehat {\mathcal S}^0_{\Gamma,k}\right)^{-1} 1.
\end{align*}
We define $z_{\varepsilon,\mathrm{ref}}$ as the root near
$\omega_M^{(2)}$ of
\begin{align}\label{eq:modify}
\frac{\rho_1}{\rho_0}
-
\frac{z^2}{c_1^2|\ln\varepsilon|}
\left[
A_M^{(2)}
+
4b_1|\Omega|\ln\left(\frac{\varepsilon z}{c_0}\right)
\right]
=0 .
\end{align}
In the numerical experiments, we can solve this refined equation and compare
$z_{\varepsilon,\mathrm{ref}}$ with the computed eigenfrequency.
\end{remark}

\subsection{Proof of statement (\ref{a2}) in Theorem 
\ref{thm:main}} \label{sec:p2}

We begin with the following Lippmann-Schwinger equation, whose integral kernel is the free-space Green's function: any solution $u \in H^1_0(D)$ in \eqref{eq:7}--\eqref{eq:8} solves 
\begin{align*}
&\quad u(x) = \alpha z^2 \int_{\Omega_\vep(y_0)} G(x,y;z/c_0) u(y)ds(y)\\
& + \int_{\partial D} \partial_\nu u(y) G(x,y;z/c_0) ds(y) + \int_{\partial \Omega_\vep(y_0)} \left(1 -\frac{\rho_0}{\rho_1\tau^{(d)}_\vep}\right)\partial_\nu u^-(y) G(x,y;z/c_0)ds(y)
\end{align*}
for $x\in D \backslash \partial \Omega_\vep(y_0)$.
Here, $\alpha $ is defined by \eqref{eq:16}.

Based on the above integral expression, it is evident that the field $u$ in $D\backslash \partial \Omega_\vep(y_0)$ can be fully computed using the value $u$ within $\Omega_\vep(y_0)$ and the normal derivatives $\partial_\nu u$ on $\partial \Omega_\vep(y_0)$ and $\partial D$. These three quantities are the solutions of the succeeding integral system
\begin{align*}
& \mathcal A_\vep(z)
\begin{pmatrix}
    u|_{\Omega_\vep(y_0)}\\
    \frac{\rho_0}{\rho_1  \tau^{(d)}_\vep}\partial_{\nu} u^-|_{\partial \Omega_\vep(y_0)}\\
    \partial_{\nu} u|_{\partial D}
\end{pmatrix} = 0,
\end{align*}
where the integral operator $\mathcal A_\vep \in \mathcal L(L^2(\Omega_\vep(y_0)) \times L^2(\partial \Omega_\vep(y_0)) \times L^2(\partial D))$ is defined by
\begin{align*}
\mathcal A_\vep(z):=
  \begin{bmatrix}&\mathbb I - z^2 \alpha \mathcal N^{z/c_0}_{\Omega_\vep(y_0)} &  \left(1 - \frac{\rho_1  \tau^{(d)}_\vep}{\rho_0}\right) \mathcal{SL}^{z/c_0}_{\partial \Omega_\vep(y_0)} & - \mathcal{SL}^{z/c_0}_{\partial D}|_{\Omega_\vep(y_0)} \\
& - z^2 \alpha \partial_\nu \mathcal N^{z/c_0}_{\Omega_\vep(y_0)} & \frac{\mathbb I} 2 + \mathcal K^{z/c_0,*}_{\partial \Omega_\vep(y_0)}  + \frac{\rho_1  \tau^{(d)}_\vep}{\rho_0}\left(\frac{\mathbb I} 2 - \mathcal K^{z/c_0,*}_{\partial \Omega_\vep(y_0)}\right) & -\partial_\nu \mathcal{SL}^{z/c_0}_{\partial D}|_{\partial \Omega_\vep(y_0)}\\
&-\partial_\nu \alpha \lambda^2  \mathcal{N}^{z/c_0}_{\Omega_\vep(y_0)}|_{\partial D}& \left(1 - \frac{\rho_1  \tau^{(d)}_\vep}{\rho_0}\right) \partial_\nu {\mathcal{SL}^{z/c_0}_{\partial \Omega_\vep(y_0)}}|_{\partial D} & \frac{\mathbb I} 2 - \mathcal K_{\partial D}^{z/c_0,*}
\end{bmatrix}.
\end{align*}

Throughout this section, we consider the part of the spectrum $\Sigma^{(d)}_\vep$ associated with background Dirichlet frequencies away from the Minnaert
frequency. More precisely, we choose $\delta>0$ such that the closed balls ${B(z_0,\delta)}:=\{z\in \mathbb C:|z-z_0|< \delta\}$, with $z_0\in\Lambda_D\cap I$, together with $\overline{B(\omega_M^{(d)},\delta)}$ are pairwise disjoint. This guarantees that \eqref{eq:15} holds.

Before proving statement \eqref{a2} of Theorem \ref{thm:main}, we prepare two technical lemmas that will be used in the proof.

\begin{lemma}
\label{lem:local-factorization}
Let $\vep >0$ be fixed and let $z_0\in\Sigma_\varepsilon^{(d)}$ be a characteristic value of
$\mathcal A_\varepsilon(z)$. Set
\begin{align*} 
n_{z_0}:=\dim\ker \mathcal A_\varepsilon(z_0).
\end{align*}
There exists a neighbourhood $U$ of $z_0$ such that 
one has, for $z\in U$,
\begin{align}\label{eq:local-factorization}
\mathcal A_\varepsilon(z)
= \mathcal E_\varepsilon(z)
    \left[ \mathbb I-\sum_{j=1}^{n_{z_0}}\mathcal P_j(z_0)
        + (z-z_0)\sum_{j=1}^{n_{z_0}}\mathcal P_j(z_0) \right]
    \mathcal F_\varepsilon(z).
\end{align}
Here 
$\mathcal E_\varepsilon(z),\ \mathcal F_\varepsilon(z)$ are two holomorphic
operator-valued functions, which are boundedly invertible for every $z\in U$, $ 
    \mathcal P_1(z_0),\ldots,\mathcal P_{n_{z_0}}(z_0)$ are 
mutually disjoint rank-one
projections. 
\end{lemma}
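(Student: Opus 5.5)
This is the Gohberg--Sigal local factorization theorem for analytic Fredholm operator-valued functions of index zero. The proof therefore has two parts: verify that $\mathcal A_\varepsilon(z)$ fits the hypotheses, then build the factorization by the Gohberg--Sigal reduction.

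\emph{Hypotheses.} Fix $\varepsilon$ and set
\begin{align*}
X:=L^2(\Omega_\varepsilon(y_0))\times L^2(\partial\Omega_\varepsilon(y_0))\times L^2(\partial D).
\end{align*}
I will first show that $z\mapsto\mathcal A_\varepsilon(z)$ is holomorphic near $z_0$ with values in $\mathcal L(X)$.
\begin{itemize}
\item In $d=3$ this holds because $G(x,y;k)$ is entire in $k$.
\item In $d=2$ it holds because $z_0>0$ lies in the domain of analyticity of the principal branch.
\end{itemize}
Next I will show that $\mathcal A_\varepsilon(z)$ is Fredholm of index zero.
\begin{itemize}
\item The blocks $\mathcal N$, $\mathcal{SL}$ restricted to the interior, and $\mathcal{SL}_{\partial D}$ evaluated on $\partial\Omega_\varepsilon(y_0)$ have smooth or weakly singular kernels. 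They are compact, since the two boundaries are disjoint.
\item On the $C^2$ boundaries, $\mathcal K^{z/c_0,*}$ differs from $\mathcal K^{0,*}$ by a compact operator, and $\mathcal K^{0,*}$ is itself compact for $C^2$ surfaces.
\item The diagonal is therefore $\mathrm{diag}\bigl(\mathbb I,\ \tfrac12(1+\rho_1\tau_\varepsilon/\rho_0)\mathbb I,\ \tfrac12\mathbb I\bigr)$ plus a compact operator, which is invertible plus compact.
\end{itemize}
A mild technicality is that $\partial_\nu\mathcal N$ acting from $L^2$ to $L^2(\Gamma)$ is compact. Alternatively one can avoid this by working in $H^1$-based spaces.

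The analytic Fredholm theorem also requires $\mathcal A_\varepsilon(z)$ to be invertible at some point of the connected domain. I would take a purely imaginary $z$, where the transmission problem is coercive and hence injective. One must also check that the free-space integral system is equivalent to the PDE there, so that no spurious kernel arises from interior Dirichlet eigenvalues of $D$. If that equivalence is delicate, I would instead invoke the local version: Gohberg--Sigal only needs Fredholmness at $z_0$ and invertibility on a punctured neighbourhood. Then $z_0$ is an isolated characteristic value.

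\emph{Construction.} Let $\ker\mathcal A_\varepsilon(z_0)$ have dimension $n_{z_0}$, and choose a basis $\phi_1,\dots,\phi_{n}$ of it. By index zero, the cokernel also has dimension $n$. I will proceed as follows.
\begin{enumerate}
\item Pick a complement $Y$ of $\operatorname{Ran}\mathcal A_\varepsilon(z_0)$ with basis $\psi_1,\dots,\psi_n$, and functionals $f_j$ dual to the $\phi_j$.
\item Define the finite-rank correction $T:=\sum_j f_j(\cdot)\psi_j$. Then $\mathcal A_\varepsilon(z)+T$ is invertible for $z$ in a small disc $U$.
\item Write $\mathcal A_\varepsilon(z)=(\mathcal A_\varepsilon(z)+T)\bigl(\mathbb I-(\mathcal A_\varepsilon(z)+T)^{-1}T\bigr)$. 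The second factor is $\mathbb I$ minus a finite-rank operator.
\item Reduce that second factor to an $n\times n$ analytic matrix function $M(z)$ acting on $\operatorname{span}\{\phi_j\}$ with $M(z_0)=0$.
\item Apply the Smith normal form for analytic matrix functions: $M(z)=E(z)\,\mathrm{diag}\bigl((z-z_0)^{k_j}\bigr)\,F(z)$, with $E,F$ holomorphic and invertible.
\end{enumerate}
Lifting back and taking $\mathcal P_j(z_0)$ as the rank-one projections onto the $\phi_j$ along the complement gives the form $\mathbb I-\sum\mathcal P_j+\sum(z-z_0)^{k_j}\mathcal P_j$. The projections are mutually disjoint because the $\phi_j$ are independent and the $f_j$ are biorthogonal to them.

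The stated formula has every exponent $k_j=1$, which means all partial multiplicities equal one (semisimplicity). This is the main obstacle. I would try to prove it from the self-adjointness of the underlying problem. The eigenvalue problem is a selfadjoint pencil in $z^2$ for the operator $-\rho_\varepsilon^{-1}$-weighted divergence form with weight $k_\varepsilon^{-1}$, so there are no Jordan chains in $z^2$. Since $z_0\neq0$, the map $z\mapsto z^2$ is locally biholomorphic, so there are none in $z$ either. One must then transfer this to $\mathcal A_\varepsilon$ by showing that the null multiplicities of $\mathcal A_\varepsilon$ coincide with those of the PDE pencil, via the equivalence of the integral system with the transmission problem. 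If this transfer fails, I would state the lemma with general $k_j$. The subsequent counting argument only needs the total multiplicity $\sum_j k_j$ anyway.
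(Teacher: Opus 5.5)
\paragraph{Verdict: genuine gap.} You verify the hypotheses correctly: holomorphy, Fredholm of index zero, and invertibility at a purely imaginary $z$. Your rebuild of the Gohberg--Sigal factorization via a finite-rank correction and the local Smith form is also sound. The paper simply quotes that general factorization with exponents $k_j(z_0)\ge1$.

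The gap is the step you yourself call the main obstacle and then leave open. The only non-generic content of the lemma is that every $k_j$ equals one, and your proposal does not prove it. Falling back to general $k_j$ proves a weaker statement, and the fallback is not harmless for the theorem. The Rouch\'e argument in part (b) counts the total Gohberg--Sigal multiplicity $\sum_j k_j$. The theorem, however, counts eigenfrequencies of the self-adjoint transmission problem, whose multiplicity is $\dim\ker\mathcal A_\varepsilon$. The two agree exactly when all $k_j=1$.

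Nor can the transfer you envisage come from the equivalence of the integral system with the PDE at the level of kernels. That equivalence only matches geometric multiplicities. The $k_j$ measure the orders of root functions of the nonlinear operator function $z\mapsto\mathcal A_\varepsilon(z)$, which kernels do not see.

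\paragraph{The missing step: apply self-adjointness to root functions directly.} This is what the paper does. Suppose $\mathcal A_\varepsilon(z)\phi(z)=(z-z_0)^m\widetilde\phi(z)$ with $\phi,\widetilde\phi$ holomorphic and $\phi(z_0)\neq0$. Form the potential
\[
\psi(z)=z^2\alpha\mathcal N^{z/c_0}_{\Omega_\varepsilon(y_0)}\phi_1-\beta^{(d)}_\varepsilon\mathcal{SL}^{z/c_0}_{\partial\Omega_\varepsilon(y_0)}\phi_2+\mathcal{SL}^{z/c_0}_{\partial D}\phi_3 .
\]
The three rows of $\mathcal A_\varepsilon$ say that $\psi(z)$ solves the transmission problem up to $O(|z-z_0|^m)$ defects: in the interior equation, in the flux condition, and in the exterior Neumann datum on $\partial D$. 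Well-posedness of the radiating exterior Neumann problem then gives $\psi(z)|_{\partial D}=O(|z-z_0|^m)$ and $\psi(z_0)|_{\partial D}=0$.

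Apply Green's formula to $\psi(z)$ and $\overline{\psi(z_0)}$ on both sides of $\partial\Omega_\varepsilon(y_0)$, using that $z_0$ and the coefficients are real. This yields
\[
\frac{z_0^2-z^2}{c_0^2}\int_{D\setminus\overline{\Omega_\varepsilon(y_0)}}\psi(z)\overline{\psi(z_0)}\,dy+\frac{\rho_1\tau^{(d)}_\varepsilon}{\rho_0}\,\frac{z_0^2-z^2}{c_1^2}\int_{\Omega_\varepsilon(y_0)}\psi(z)\overline{\psi(z_0)}\,dy=O(|z-z_0|^m).
\]
If $m\ge2$, divide by $z-z_0$ and let $z\to z_0$. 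Since $z_0\neq0$, a positively weighted $L^2(D)$-norm of $\psi(z_0)$ vanishes, so $\psi(z_0)=0$ in $D$. The jump relations across $\partial\Omega_\varepsilon(y_0)$ and $\partial D$, together with exterior uniqueness, then force $\phi(z_0)=0$, a contradiction.

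Hence every root function has order one. Apply this to $\phi(z)=\mathcal F_\varepsilon(z)^{-1}e_j$ with $e_j\in\operatorname{Ran}\mathcal P_j(z_0)$, a root function of order exactly $k_j$. This gives $k_j=1$ for all $j$, which completes your factorization.
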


\begin{proof}
We first claim that if
\begin{align} \label{eq:19}
   \mathcal A_\vep(z) \begin{bmatrix}
       \phi_1(z)\\
       \phi_2(z)\\
       \phi_3(z)
   \end{bmatrix} = (z-z_0)^m
   \begin{bmatrix}
       \widetilde \phi_1(z)\\
       \widetilde \phi_2(z)\\
       \widetilde \phi_3(z)
   \end{bmatrix},
   \end{align}
then we have $m=1$. Here, $(\phi_1(z),\phi_2(z),\phi_3(z))$ and $\left(\widetilde \phi_1(z), \widetilde \phi_2(z), \widetilde \phi_3(z)\right)$ are holomorphic near $z_0$ and 
\begin{align}\label{eq:32}
\left(\phi_1(z_0),\phi_2(z_0), \phi_3(z_0) \right) \ne \textbf{0},\qquad \left(\widetilde \phi_1(z_0),\widetilde \phi_2(z_0), \widetilde \phi_3(z_0) \right) \ne \textbf{0}.
\end{align}
Define 
\begin{align*}
 \psi(z)=  z^2 \alpha N^{z/c_0}_{\Omega_\vep(y_0)} \phi_1(z) - \left(1 - \frac{\rho_1  \tau^{(d)}_\vep}{\rho_0}\right) SL^{z/c_0}_{\partial \Omega_\vep(y_0)}\phi_2(z) +  SL^{z/c_0}_{\partial D}\phi_3(z).
\end{align*}
By a straightforward calculation, we have 
\begin{align}
&\Delta \psi(z) + z^2/c^2_0\psi(z) = -z^2\alpha \phi_1 (z) \quad \mathrm{in}\; \Omega_\vep(y_0). \label{eq:re_5}
\end{align}
Using \eqref{eq:19}, we obtain 
\begin{align}
&\phi_1(z) - \psi(z) = (z-z_0)^m  \widetilde \phi_1(z)\quad \mathrm{in}\; \Omega_\vep(y_0), \label{eq:12}\\
& \Delta \psi(z) + \frac{z^2}{c^2_0}  \psi(z) = 0 \quad  \mathrm{in}\; D \backslash \overline{\Omega_\vep(y_0)},\label{eq:22}\\
& \partial^-_\nu \psi_+(z) = \frac{\rho_1 \tau_\vep^{(d)}}{\rho_0}\partial^+_\nu \psi_-(z)- \left(1 - \frac{\rho_1 \tau_\vep^{(d)}}{\rho_0}\right) (z-z_0)^m \widetilde \phi_2(z) \quad \mathrm{on}\; \partial \Omega_\vep(y_0), \label{eq:20}
\end{align}
and
\begin{align}
\psi_+(z) = \psi_-(z),\quad \partial^+_\nu \psi(z) = - (z-z_0)^m \widetilde \phi_3(z) \quad \mathrm{on}\; \partial D.\label{eq:21}
\end{align}
In conjunction with \eqref{eq:21} and the well-posedness of the external scattering problem, we have 
\begin{align} \label{eq:24}
\|\gamma \psi(z)\|_{H^{1/2}(\Gamma)} \le C |z-z_0|^m.
\end{align}
Combining \eqref{eq:12} and \eqref{eq:re_5} gives 
\begin{align} \label{eq:23}
\Delta \psi(z) + \frac{z^2}{c^2_1}\psi(z) = -(z-z_0)^m \alpha \widetilde \phi_1(z) \quad \mathrm{in}\; \Omega_\vep(y_0).
\end{align}
Furthermore, setting $z = z_0$ in  \eqref{eq:22}, \eqref{eq:20}, \eqref{eq:24} and \eqref{eq:23}, we obtain  
\begin{align*}
&\Delta \psi(z_0) + \frac{z_0^2}{c^2_0}  \psi(z_0) = 0 \qquad\qquad \ \mathrm{in}\; D \backslash \overline{\Omega_\vep(y_0)},\\
&\Delta \psi(z_0) + \frac{z_0^2}{c^2_1} \psi(z_0) = 0 \qquad\qquad\;\;\mathrm{in}\;\overline{\Omega_\vep(y_0)},\\
&\partial^-_\nu \psi_+(z_0) = \frac{\rho_1 \tau_\vep^{(d)}}{\rho_0}\partial^+_\nu \psi_-(z_0) \qquad \mathrm{on}\; \partial \Omega_\vep(y_0), \\
&\psi(z_0) = 0 \qquad \qquad\qquad\qquad\qquad\;\; \mathrm{on}\; \partial D,
\end{align*}
From this, using \eqref{eq:22}, \eqref{eq:20},  and \eqref{eq:23}, and applying Green formulas, we obtain 
\begin{align}
&\int_{\partial D}[\partial_\nu \psi(z) \overline{\psi(z_0)} - \psi(z) \partial_\nu \overline{\psi(z_0)}](y) d\sigma(y) \notag \\
&= \int_{D\backslash \overline{\Omega_\vep(y_0)}}  
\frac{z_0^2-z^2}{c_0^2}
\left[\psi(z)\overline{\psi(z_0)}\right](y)\,dy + \int_{\partial \Omega_\vep(y_0)}[\partial^+_\nu \psi(z) \overline{\psi(z_0)} - \psi(z) \partial^+_\nu \overline{\psi(z_0)}](y) d\sigma(y) \notag \\
& = \int_{D\backslash \overline{\Omega_\vep(y_0)}}  
\frac{z_0^2-z^2}{c_0^2}
\left[\psi(z)\overline{\psi(z_0)}\right](y)\,dy \notag \\
&\qquad + \frac{\rho_1 \tau_\vep^{(d)}}{\rho_0}\int_{\partial \Omega_\vep(y_0)}[\partial^-_\nu \psi(z) \overline{\psi(z_0)} - \psi(z) \partial^-_\nu \overline{\psi(z_0)}](y) d\sigma(y) \notag \\
& = \int_{D\backslash \overline{\Omega_\vep(y_0)}}  
\frac{z_0^2-z^2}{c_0^2}
\left[\psi(z)\overline{\psi(z_0)}\right](y)\,dy + \frac{\rho_1 \tau_\vep^{(d)}}{\rho_0}
\int_{\Omega_\vep(y_0)}\frac{z_0^2-z^2}{c_1^2}
\left[\psi(z)\overline{\psi(z_0)}\right](y)\,dy
\notag \\
&\qquad + \frac{\rho_1 \tau_\vep^{(d)}}{\rho_0} \int_{\Omega_\vep(y_0)} (z-z_0)^m z^2\alpha
\widetilde \phi_1(z)\overline{\psi(z_0)}\,dy. \label{eq:42} 
\end{align}
If $m\ge2$, dividing by $z-z_0$ and letting $z\to z_0$, we deduce from \eqref{eq:42} that  
\begin{align*}
-\frac{2z_0}{c_0^2} \int_{D\backslash \overline{\Omega_\vep(y_0)}}|\psi(z_0)|^2(y)\,dy -\frac{2z_0}{c_1^2} \frac{\rho_1 \tau_\vep^{(d)}}{\rho_0} \int_{\Omega_\vep(y_0)} |\psi(z_0)|^2(y)\,dy=0,
\end{align*}
which implies 
\begin{align*}
\psi(z_0)=0 \qquad \mathrm{in}\; D.
\end{align*}
From this, we obtain $\left(\phi_1(z_0),\phi_2(z_0), \phi_3(z_0) \right)= 0$, which is a contradiction to the nonzero assumption \eqref{eq:32}. Therefore, we have $m =1 $.

Finally, we note that $\mathcal A_\vep(z)$ is a Fredhlom operator with index 0 and $\mathcal A_0$ is normal at its zeros. 
Then, for each zero $z_0$ of $\mathcal A_\vep(z)$, by using Gohberg and Sigal theory (cf.\cite{{AK-09}}), we obtain
\begin{align*}
\mathcal A_\varepsilon(z)
= \mathcal E_\varepsilon(z)
    \left[ \mathbb I-\sum_{j=1}^{n_{z_0}}\mathcal P_j(z_0)
        + (z-z_0)^{k_l(z_0)}\sum_{j=1}^{n_{z_0}}\mathcal P_j(z_0) \right]
    \mathcal F_\varepsilon(z).
\end{align*}
where $\mathcal P_1(z_0),\ldots, \mathcal P_{n_{0}}(z_0)$ are mutually disjoint one-dimensional projections, 
\begin{align*}
k_1(z_0)\ldots, k_{n_{z_0}}(z_0)\in \mathbb N,
\end{align*}
and $\mathcal F (z)$ and $\mathcal E(z)$ are both holomorphic and invertible near $z_0$. This, together with the previous proved claim shows that 
\begin{align*}
    k_1(z_0)=\cdots= k_{n_{z_0}}(z_0) = 1.
\end{align*}
Thus the proof of this lemma is thus completed.
\end{proof}

Define
\begin{align*}
\mathcal A_\vep^{sc}(z):= \begin{bmatrix}&\mathbb I - z^2 \alpha \mathcal N^{z/c_0}_{\Omega_\vep(y_0)} & \left(1 - \frac{\rho_1  \tau^{(d)}_\vep}{\rho_0}\right) \mathcal {SL}^{z/c_0}_{\partial \Omega_\vep(y_0)} \\
& - z^2 \alpha \partial_\nu \mathcal {N}^{z/c_0}_{\Omega_\vep(y_0)} & \frac{\mathbb I} 2 + \mathcal K^{z/c_0,*}_{\partial \Omega_\vep(y_0)}  + \frac{\rho_1  \tau^{(d)}_\vep}{\rho_0}\left(\frac{\mathbb I} 2 - \mathcal K^{z/c_0,*}_{\partial \Omega_\vep(y_0)}\right)
\end{bmatrix}.
\end{align*}
The invertibility of $\mathcal A_\vep^{sc}(z)$ plays a key role in the proof of \eqref{a2} of Theorem \ref{thm:main}. To this end, we introduce the following unitary operators:
\begin{align*}
(\mathbb U_{\varepsilon,\Omega}f)(x)
&:=\varepsilon^{d/2}f(\Phi_\varepsilon(x;y_0)),
&& f\in L^2(\Omega_\varepsilon(y_0)),\quad x\in\Omega,\\
(\mathbb U_{\varepsilon,\Gamma}g)(x)
&:=\varepsilon^{(d-1)/2}g(\Phi_\varepsilon(x;y_0)),
&& g\in L^2(\partial \Omega_\varepsilon(y_0)),\quad x\in\Gamma.
\end{align*}
and consider the corresponding unitarily  operator:
\begin{align*}
\widetilde{\mathcal A}_\varepsilon^{\mathrm{sc}}(z)
:=
\operatorname{diag}
\bigl(\mathbb U_{\varepsilon,\Omega}, \mathbb U_{\varepsilon,\Gamma}\bigr)
\mathcal A_\varepsilon^{\mathrm{sc}}(z)
\left(\operatorname{diag}
\bigl(\mathbb U_{\varepsilon,\Omega}, \mathbb U_{\varepsilon,\Gamma}\bigr)\right)^{-1}.
\end{align*}
Clearly, whenever the inverses exist,
\begin{align*}
\left\|
\bigl(\widetilde{\mathcal A}_\varepsilon^{\mathrm{sc}}(z)\bigr)^{-1}
\right\|_{\mathcal L(L^2(\Omega)\times L^2(\Gamma))}
=
\left\|
\bigl(\mathcal A_\varepsilon^{\mathrm{sc}}(z)\bigr)^{-1}
\right\|_{\mathcal L(L^2(\Omega_\varepsilon)\times
L^2(\Gamma_\varepsilon))}.
\end{align*}
The following lemma provides the required invertibility estimate for the unitarily equivalent operator.

\begin{lemma}\label{lem:scattering-resonance}
Let $I\Subset(0,\infty)$ be a bounded interval separated from
the Minnaert frequency $\omega^{(d)}_M$. Then there exist constants
$\varepsilon_0>0$, $ 0 < \delta_0 <\delta$ and $C_{\delta_0} >0$, independent of $\varepsilon$, such that
for all $0<\varepsilon<\varepsilon_0$ and all 
\begin{align*}
z\in B^{\mathbb C}_{I,\delta_0}:=\{z\in\mathbb C: |z-\omega^{(d)}_M| > \delta_0,\; \mathrm{Re}(z) \in I, \;|\mathrm{Im}(z)| < \delta_0\},
\end{align*}
the inverse of $\widetilde{\mathcal A}_{\varepsilon}^{\mathrm{sc}}(z)$ exists.

Precisely,  given $\mathbf g=(g_\Omega,g_\Gamma)$, let 
\begin{align*}
(\widetilde g_\Omega(z),\widetilde g_\Gamma(z))
=
\bigl(\widetilde{\mathcal A}_{\varepsilon}^{\mathrm{sc}}(z)\bigr)^{-1}
\mathbf g,
\end{align*}
then, for $z \in B^{\mathbb C}_{I,\delta_0}$, we have 
\begin{align} \label{eq:26}
\|\widetilde g_\Omega(z)\|_{L^2(\Omega)} + \| \widetilde g_\Gamma(z)\|_{L^2(\Gamma)}
\leq
C_{\delta_0}
\left(
\frac{\varepsilon^{1/2}}{\tau_\varepsilon^{(d)}}
\|g_\Omega\|_{L^2(\Omega)}
+
\|\mathcal Qg_\Gamma\|_{L^2(\Gamma)}
+
\frac{1}{\tau_\varepsilon^{(d)}}
\|\mathcal Pg_\Gamma\|_{L^2(\Gamma)}
\right).
\end{align}
\end{lemma}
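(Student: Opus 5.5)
We work on the rescaled system $\widetilde{\mathcal A}^{\mathrm{sc}}_\varepsilon(z)\,(\widetilde g_\Omega,\widetilde g_\Gamma)^T=(g_\Omega,g_\Gamma)^T$ and repeat the Schur-complement reduction of Section~\ref{sec:p1}, now with the free-space Green function. There is no $H_D$ correction, so the operators $\mathcal K_{\varepsilon,\mathrm{bod}}^z$, $\mathcal{SL}^z_{\varepsilon,\mathrm{bod}}$, $\mathcal N^z_{\varepsilon,\mathrm{bod}}$ are absent. After pulling back by $\mathbb U_{\varepsilon,\Omega},\mathbb U_{\varepsilon,\Gamma}$:
\begin{itemize}
\item the $(1,1)$ block becomes $\mathbb I-\varepsilon^2z^2\alpha\mathcal N^{\varepsilon z/c_0}_\Omega$;
\item the $(1,2)$ block becomes $\beta_\varepsilon\,\varepsilon\,\mathcal{SL}^{\varepsilon z/c_0}_\Gamma$, up to the unitary normalisation $\varepsilon^{1/2}$;
\item the $(2,1)$ block becomes $\varepsilon z^2\alpha\,\partial_\nu\mathcal N^{\varepsilon z/c_0}_\Omega$, again with the $\varepsilon^{\pm1/2}$ factors from $\mathbb U$;
\item the $(2,2)$ block becomes $\mathcal B_\varepsilon=\tfrac12\mathbb I+\mathcal K^{\varepsilon z/c_0,*}_\Gamma+\tfrac{\rho_1\tau_\varepsilon}{\rho_0}\bigl(\tfrac12\mathbb I-\mathcal K^{\varepsilon z/c_0,*}_\Gamma\bigr)$.
\end{itemize}

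\textbf{Step 1.} Invert the $(1,1)$ block by a Neumann series, uniformly for $z$ in the bounded set $B^{\mathbb C}_{I,\delta_0}$. Then eliminate $\widetilde g_\Omega=(\mathbb I-\cdots)^{-1}(g_\Omega-\text{(1,2)}\,\widetilde g_\Gamma)$. This yields the boundary equation $\mathcal B^{\mathrm{sc}}_\varepsilon(z)\widetilde g_\Gamma=g_\Gamma-(2,1)(\mathbb I-\cdots)^{-1}g_\Omega$. The source term on the right carries the $\varepsilon^{1/2}$ factor seen in \eqref{eq:26}.

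\textbf{Step 2.} Invert $\mathcal B^{\mathrm{sc}}_\varepsilon(z)$ by the projection splitting of Lemma~\ref{lem:equilibrium-projection}. By \eqref{eq:14}, $\bigl(\tfrac12+\mathcal K^{0,*}_\Gamma\bigr)$ is invertible on $\mathcal Q L^2(\Gamma)$ and annihilates $\mathcal P L^2(\Gamma)$. Writing $\mathcal B^{\mathrm{sc}}_\varepsilon$ as a $2\times2$ block matrix in the decomposition $\mathcal P\oplus\mathcal Q$ gives the following structure.
\begin{itemize}
\item The $\mathcal Q\mathcal Q$ block is an $O(\tau_\varepsilon)$ perturbation of an invertible operator.
\item The off-diagonal blocks are $O(\tau_\varepsilon)$.
\item The $\mathcal P\mathcal P$ block is a scalar. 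Exactly as in the derivation of \eqref{eq:148} (respectively \eqref{eq:6} in $d=2$), it equals $\tau_\varepsilon\,\kappa_d\bigl(1-z^2/(\omega_M^{(d)})^2\bigr)+o(\tau_\varepsilon)$ with $\kappa_d\neq0$. In $d=2$ the $\varepsilon^2\ln\varepsilon$ term is precisely what is balanced by $\tau^{(2)}_\varepsilon$.
\end{itemize}
Since $|z-\omega_M^{(d)}|>\delta_0$ and $\mathrm{Re}\,z\in I\Subset(0,\infty)$, the factor $1-z^2/(\omega_M^{(d)})^2$ is bounded below by $c_{\delta_0}>0$. The Schur complement on the $\mathcal P$-range, namely the $\mathcal P\mathcal P$ block minus $O(\tau_\varepsilon^2)$, is therefore bounded below by $c\,\tau_\varepsilon$. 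The block inverse formula then gives
\[
\|\widetilde g_\Gamma\|\le C\bigl(\|\mathcal Q h\|+\tau_\varepsilon^{-1}\|\mathcal P h\|\bigr),
\]
where $h$ is the right-hand side of the boundary equation.

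\textbf{Step 3.} Insert $h=g_\Gamma+O(\varepsilon^{1/2})g_\Omega$ and the bound for $\widetilde g_\Omega$ from Step 1 to obtain \eqref{eq:26}. The $\mathcal P$-component of the $g_\Omega$ source costs the factor $\tau_\varepsilon^{-1}$, which produces the term $\varepsilon^{1/2}/\tau_\varepsilon$. Invertibility itself follows because $\widetilde{\mathcal A}^{\mathrm{sc}}_\varepsilon$ is Fredholm of index zero and the a priori bound shows it is injective.

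\textbf{Main obstacle.} The hard step is the uniform lower bound on the scalar $\mathcal P\mathcal P$ Schur complement for complex $z$ in the whole strip, especially in $d=2$. There, $\ln(\varepsilon z)$ and the $\eta_k$-normalisation of $\varphi_0$ from Remark~\ref{re1} introduce corrections that are only $O(|\ln\varepsilon|^{-1})$ relative to the leading term. I would first track these corrections explicitly as in \eqref{eq:6}, verify that they are $o(1)$ relative to $\tau_\varepsilon$ uniformly for $z$ in the strip, and then choose $\delta_0$ small and $\varepsilon_0$ small accordingly.
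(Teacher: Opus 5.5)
Your proposal is correct and follows essentially the paper's route. You eliminate the uniformly invertible block $X_\varepsilon(z)$ and invert the boundary Schur complement $\mathscr B^{\mathrm{sc}}_\varepsilon(z)$ through the $\mathcal P$--$\mathcal Q$ splitting, where the scalar $\mathcal P\mathcal P$ entry behaves like $\tau_\varepsilon^{(d)}\bigl(1-z^2/(\omega_M^{(d)})^2\bigr)$ and is bounded below by $c_{\delta_0}\tau_\varepsilon^{(d)}$ off $B(\omega_M^{(d)},\delta_0)$. You then back-substitute, and the $\mathcal P$-part of the $O(\varepsilon^{1/2})$ source $Z_\varepsilon X_\varepsilon^{-1}g_\Omega$ produces the factor $\varepsilon^{1/2}/\tau_\varepsilon^{(d)}$. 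You also spell out the block-inverse bookkeeping and the lower bound on $|1-z^2/(\omega_M^{(d)})^2|$, which the paper compresses into ``repeating the $\mathcal P$-$\mathcal Q$ decomposition.''
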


\begin{proof}

Using the scaling relations for the integral operators, we obtain
\begin{align*}
\widetilde{\mathcal A}_\varepsilon^{\mathrm{sc}}(z)
=
\begin{pmatrix}
\mathbb I - \alpha (\vep z)^2\mathcal N_\Omega^{\vep z/c_0}
&
\left(1- \frac{\rho_1  \tau^{(d)}_\vep}{\rho_0}\right)\varepsilon^{3/2}
\mathcal{SL}_\Gamma^{\vep z/c_0}
\\[1mm]
-\alpha z^2\varepsilon^{1/2}
\partial_\nu\mathcal N_\Omega^{\vep z/c_0}
&
\displaystyle
\frac{\mathbb I}{2}
+\mathcal K_\Gamma^{\vep z/c_0,*}
+\frac{\rho_1  \tau^{(d)}_\vep}{\rho_0}
\left(
\frac{\mathbb I}{2}
-\mathcal K_\Gamma^{\vep z/c_0,*}
\right)
\end{pmatrix}=: 
\begin{pmatrix}
X_\varepsilon(z) & Y_\varepsilon(z)\\
Z_\varepsilon(z) & W_\varepsilon(z)
\end{pmatrix}.
\end{align*}
The upper-left block is uniformly invertible for all sufficiently
small $\vep$. Therefore, the invertiblility of
$\widetilde{\mathcal A}_\varepsilon^{\mathrm{sc}}(z)$ is equivalent
to that of the following Schur complement on $L^2(\Gamma)$:
\begin{align*}
\begin{aligned}
\mathscr B^{\mathrm{sc}}(z)
:=
&\frac{\mathbb I}{2}
+\mathcal K_\Gamma^{\vep z/c_0,*}
+\frac{\rho_1  \tau^{(d)}_\vep}{\rho_0}
\left(
\frac{\mathbb I}{2}
-\mathcal K_\Gamma^{\vep z/c_0,*}
\right)
\\
&\quad
+\alpha\left(1 -\frac{\rho_1  \tau^{(d)}_\vep}{\rho_0}\right)(\vep z)^2
\partial_\nu\mathcal N_\Omega^{\vep z/c_0} \left(
\mathbb I-\alpha (\vep z)^2\mathcal N_\Omega^{\vep z/c_0}
\right)^{-1}
\mathcal{SL}_\Gamma^{\vep z/c_0}.
\end{aligned}
\end{align*}
The desired estimate then follows by repeating the
$\mathcal P$-$\mathcal Q$ decomposition as in the derivation of \eqref{eq:45}--\eqref{eq:46} and the low-frequency asymptotics of those operators, we obtain that 
\begin{align} \label{eq:53}
\left\|
\bigl(\mathscr B_{\varepsilon}^{\mathrm{sc}}(z)\bigr)^{-1}h
\right\|_{L^2(\Gamma)}
\leq
C_{\delta_0}
\left(
\left\|\mathcal Qh\right\|_{L^2(\Gamma)}
+
\frac{1}{\tau_\varepsilon^{(d)}}
\left\|\mathcal Ph\right\|_{L^2(\Gamma)}
\right), \quad \mathrm{for}\; z\in B^{\mathbb C}_{I,\delta_0}.
\end{align}
Furthermore,
block elimination yields
\begin{align*}
&\widetilde g_\Gamma(z)
= 
\bigl(\mathscr B_{\varepsilon}^{\mathrm{sc}}(z)\bigr)^{-1}
\left(
g_\Gamma-Z_\varepsilon(z)X_\varepsilon(z)^{-1}g_\Omega
\right)\\
\mathrm{and}\qquad
&\widetilde g_\Omega(z)
=
X_\varepsilon(z)^{-1}
\left(
g_\Omega-Y_\varepsilon(z)\widetilde g_\Gamma(z)
\right).
\end{align*}
This, together with \eqref{eq:53} yields \eqref{eq:26}.
\end{proof}

Combining Lemmas \ref{lem:local-factorization} and \ref{lem:scattering-resonance}, we are ready to prove statement (\ref{a2}) of Theorem \ref{thm:main}.

\begin{proof}[Proof of statement \eqref{a2} of Theorem \ref{thm:main}]
We denote the Schur complement of $\mathcal A_\vep^{\mathrm{sc}}$ by 
\begin{align*}
\mathcal A_\vep^{\mathrm{sc},\mathrm{shur}}(z):=
\frac{\mathbb I} 2 - \mathcal K_{\partial D}^{z/c_0,*} -  \mathcal A^{\mathrm{sc},\mathrm{rem}}_\vep(z),
\end{align*}
where
\begin{align*}
\mathcal A^{\mathrm{sc},\mathrm{rem}}_\vep(z)&:= \begin{bmatrix}
    &-\partial_\nu \alpha \lambda^2  \mathcal{N}^{z/c_0}_{\Omega_\vep(y_0)}|_{\partial D}& \left(1 - \frac{\rho_1  \tau^{(d)}_\vep}{\rho_0}\right) \partial_\nu {\mathcal{SL}^{z/c_0}_{\partial \Omega_\vep(y_0)}}|_{\partial D}
\end{bmatrix} \\
&\left(\mathcal A^{\mathrm{sc}}_{\vep}(z)\right)^{-1}  \begin{bmatrix}
\mathcal {SL}^{z/c_0}_{\partial D}|_{\Omega_\vep(y_0)} \\
\partial_\nu \mathcal{SL}^{z/c_0}_{\partial D}|_{\partial \Omega_\vep(y_0)}
\end{bmatrix}.
\end{align*}
Since the invertibility of $\mathcal A_\vep$ reduces to the invertibility of $\mathcal A^{\mathrm{sc},\mathrm{shu r}}_\vep$, it suffices to investigate the zeros of $\mathcal A^{\mathrm{sc},\mathrm{shur}}_\vep$. 
It is easy to verify that for $z \in \overline{B(z_0,\delta)}$ with $z_0 \in \Lambda_D \cap I$,
\begin{align}
&\left\|\partial_\nu \alpha \lambda^2  \mathcal N^{z/c_0}_{\Omega_\vep(y_0)}\right\|_{_{\mathcal L(L^2(\Omega_\vep(y_0)),L^2(\partial D))}} + \left\|\partial_\nu {\mathcal {SL}^{z/c_0}_{\partial \Omega_\vep(y_0)}}\right\|_{\mathcal L(L^2(\partial\Omega_\vep(y_0)),L^2(\partial D))}\le C\vep^{\frac{d-1}{2}}. \label{eq:27}
\end{align}
Furthermore, by a straightforward calculation, we have: for $g\in L^2(\partial D)$,
\begin{align}
&\left\| \mathbb U_{\vep,\Omega}  \left(\mathcal{SL}^{z/c_0}_{\partial D} g|_{\Omega_\vep(y_0)}\right)\right\|_{L^2(\Omega)} \le C \vep^{\frac{d}2}\|g\|_{L^2(\partial D)}, \notag \\
& \left\|\mathcal Q\mathbb U_{\vep,\Gamma}\left(\partial_\nu \mathcal{SL}^{z/c_0}_{\partial D}g|_{\partial \Omega_\vep(y_0)}\right) \right\|_{L^2(\Gamma)} \le C \vep^{\frac{d-1}2}\|g\|_{L^2(\partial D)}, \notag\\
&\left\|\mathcal P\mathbb U_{\vep,\Gamma}\left(\partial_\nu \mathcal{SL}^{z/c_0}_{\partial D}g|_{\partial \Omega_\vep(y_0)}\right) \right\|_{L^2(\Gamma)} \le C \vep^{\frac{d+1}2}\|g\|_{L^2(\partial D)}. \notag 
\end{align}
This, together with Lemma \ref{lem:scattering-resonance} and \eqref{eq:27} gives
\begin{align} \label{eq:29}
\left\|\mathcal A_\vep^{\mathrm{sc},\mathrm{rem}}(z)\right\|_{\mathcal L(L^2(\partial D))} \le C_0 \eta_d(\vep), \quad \mathrm{for}\; B^{\mathbb C}_{I,\delta_0}.
\end{align}

We note that
the zeros of $ \mathbb I/ 2 - \mathcal K_{\partial D}^{z/c_0,*}
$ are precisely the elements of $\Lambda_D$ (see, e.g., \cite[Lemma~11.8]{AK-04}, together with the converse implication following from Green's representation formula). Furthermore, this operator is a Fredhlom of index zero and is normal at each of its zeros (in the sense of Gohberg and Sigal; see
\cite[Section~1.1.4]{AK-09}). 
Moreover, it is known that each $z_0 \in \Lambda_D$ is an isolated semisimple zero of
$\mathbb I/2 -\mathcal K_{\partial D}^{z/c_0,*}$. Hence, after possibly shrinking
$0 < \widetilde \delta < \delta_0$, this operator
is invertible for $0<|z-z_0|< \widetilde \delta$, and there exists a constant
$C_1>0$ independent of $\widetilde \delta$ such that
\begin{align} \label{eq:30}
\left\|
\left(
\frac12\mathbb I-\mathcal K_{\partial D}^{z/c_0,*}
\right)^{-1}
\right\|_{\mathcal L(L^2(\partial D))}
\le
C_1|z-z_0|^{-1},
\qquad
0<|z-z_0|< \widetilde \delta .
\end{align}
Choose $M > C_0 C_1$, independently of $\vep$, and set
\begin{align*}
r_{\vep}:= M \eta_d(\vep).
\end{align*}
For all sufficiently small $\varepsilon>0$, we have
$r_\varepsilon< \widetilde \delta$. Moreover, applying \eqref{eq:29} and \eqref{eq:30}, we have : for every $z\in {\partial B(z_0,r_\vep)}$, 
\begin{align} \label{eq:33}
\left\|
\left(
\frac12\mathbb I-\mathcal K_{\partial D}^{z/c_0,*}
\right)^{-1}A_\vep^{\mathrm{sc},\mathrm{rem}}
\right\|
&\leq
\frac{C_0}{r_\vep}
C_1\eta_d(\vep)
<1.
\end{align}
Therefore, using generalized Rouché theorem (see, e.g., \cite{AK-09}), we obtain that
$\mathbb I/2 -\mathcal K_{\partial D}^{z/c_0,*}$ and $\mathcal A^{\mathrm{sc},\mathrm{shu r}}_\vep$ have the same total
number of zeros inside $ B(z_0,r_\vep)$. Recall that $z_0$ is the unique semisimple
zero of $\mathbb I/2 -\mathcal K_{\partial D}^{z/c_0,*}$ in
$B(z_0,r_\vep)$. Consequently,
\begin{align*}
\mathrm{dim}\left(\mathrm{Ker}\big(\mathbb I/2 -\mathcal K_{\partial D}^{z_0/c_0,*}\big)\right) = m_D(z_0).
\end{align*}
Furthermore, Lemma \ref{lem:local-factorization} implies that the zeros of $\mathcal A^{\mathrm{sc},\mathrm{shu r}}_\vep$ are also
semisimple. It follows that $\mathcal A_{\vep}^{\mathrm{sc},\mathrm{shu r}}$ has precisely $m_D(z_0)$ zeros in $B(z_0,r_\vep)$, denoted by
$z_{\vep,1},\ldots,z_{\vep,m_D(z_0)}$, repeated according to algebraic multiplicity.
In particular,
\begin{align*}
\max_{1\leq j\leq m_D(z_0)}
|z_{\varepsilon,j}-z_0|
\leq M\eta_d(\vep),
\end{align*}
which implies \eqref{eq:31}.

On the other hand, arguing as in the derivation of \eqref{eq:33}, we find that \eqref{eq:33} also holds in the domain
\begin{align*}
\left\{z\in \mathbb C: \mathrm{Re}(z) \in I, \; |\mathrm{Im}(z)| < \delta, \; z \notin B(\omega_M^{(d)},\delta)\right\} \backslash\left( \bigcup_{z_0\in I \cap \Lambda_D} {B(z_0,r_\vep)}\right).
\end{align*}
Hence, the Neumann-series argument shows that $A_\vep^{\mathrm{sc},\mathrm{shur}}$ is invertible. Consequently, there are no eigenfrequencies in this region.
\end{proof}

\section{Numerical experiments}\label{sec:numerics}

In this section, we present numerical experiments to verify the eigenvalue asymptotics in Theorem \ref{thm:main}.
 
Recall that $D$ and $\Omega_\vep(y_0)$ are defined by \eqref{eq:def Omega_e}, where $\vep=10^{-4}$ is fixed throughout of this section. Furthermore, the  reference inclusion $\Omega$ associated with $\Omega_\vep(y_0)$ is chosen as $B_1(0)$ in this section, where $B_1(0)$ is a unit disk centered at the origin in $\mathbb R^2$. The acoustic parameters in \eqref{eq:17} are chosen as follows
\begin{align*}
\rho_0 = \rho_1 = 1, \quad k_0=k_1=1,
\end{align*}
and
\begin{align*}
\rho_\vep(x)=
\begin{cases}
\rho_0, & x\in D\setminus\Omega_\vep(y_0),\\
\rho_1\tau_\vep, & x\in\Omega_\vep(y_0),
\end{cases}
\qquad
k_\vep(x)=
\begin{cases}
k_0, & x\in D\setminus\Omega_\vep(y_0),\\
k_1\tau_\vep, & x\in\Omega_\vep(y_0),
\end{cases}
\end{align*}
with the two-dimensional scaling parameter
\begin{align*}
\tau_\vep=\vep^2|\ln\vep|\approx 9.2103\times10^{-8}.
\end{align*}
The variational formulation of the eigenvalue problem \eqref{eq:7}-\eqref{eq:8} reads: 
find $(\lambda,u)\in\R\times H_0^1(D)$, $\|u\|_{L^2(D)}=1$, such that
\begin{equation}\label{eq:var}
\int_D\frac{1}{\rho_\vep}\nabla u\cdot\nabla v\,dx
=
\lambda\int_D\frac{1}{k_\vep}uv\,dx
\qquad\forall\,v\in H_0^1(D).
\end{equation}
We set the signed frequency $z=\sqrt{\lambda}$.


The computational domain is meshed using \textsc{Gmsh}~\cite{GeuzaineRemacle2009}. Equation \eqref{eq:var} is discretized using conforming $P_1$ Lagrange finite elements. This leads to the generalized matrix eigenvalue problem
\begin{align}\label{eq:dis eig}
K_h U_h=\lambda_h M_h U_h,
\end{align}
where $K_h$ and $M_h$ are the standard stiffness and mass matrices with element-wise coefficients $1/\rho_\vep$ and $1/k_\vep$, respectively. The outer Dirichlet boundary condition is enforced by removing the corresponding degrees of freedom. The eigenpairs are computed with the MATLAB function \texttt{eigs} applied to $(K_h,M_h)$. 

For a centered circular inclusion we have $|\Omega|=\pi$ and hence the two-dimensional leading-order Minnaert prediction
\begin{align*}
\omega_M^{(2)}=\sqrt{\frac{2\pi k_1}{|\Omega|\rho_0}}=\sqrt{2},
\qquad
\lambda_M=\bigl(\omega_M^{(2)}\bigr)^2=2.
\end{align*}

\subsection{Unit disk with a centered inclusion}\label{sec:num-disk}

In the first example, we take the two-dimensional domain $D=B(0,1)$ and $\Omega_\vep(y_0)=B(0,\vep)$. The exact background eigenvalues of the homogeneous disk are $\lambda_{m,n}=j_{m,n}^2$, where $j_{m,n}$ is the $n$-th positive zero of the Bessel function $J_m$; eigenvalues with $m\ge1$ have multiplicity two.
\begin{figure}[!h]
	\centering
	\includegraphics[width=0.5\textwidth]{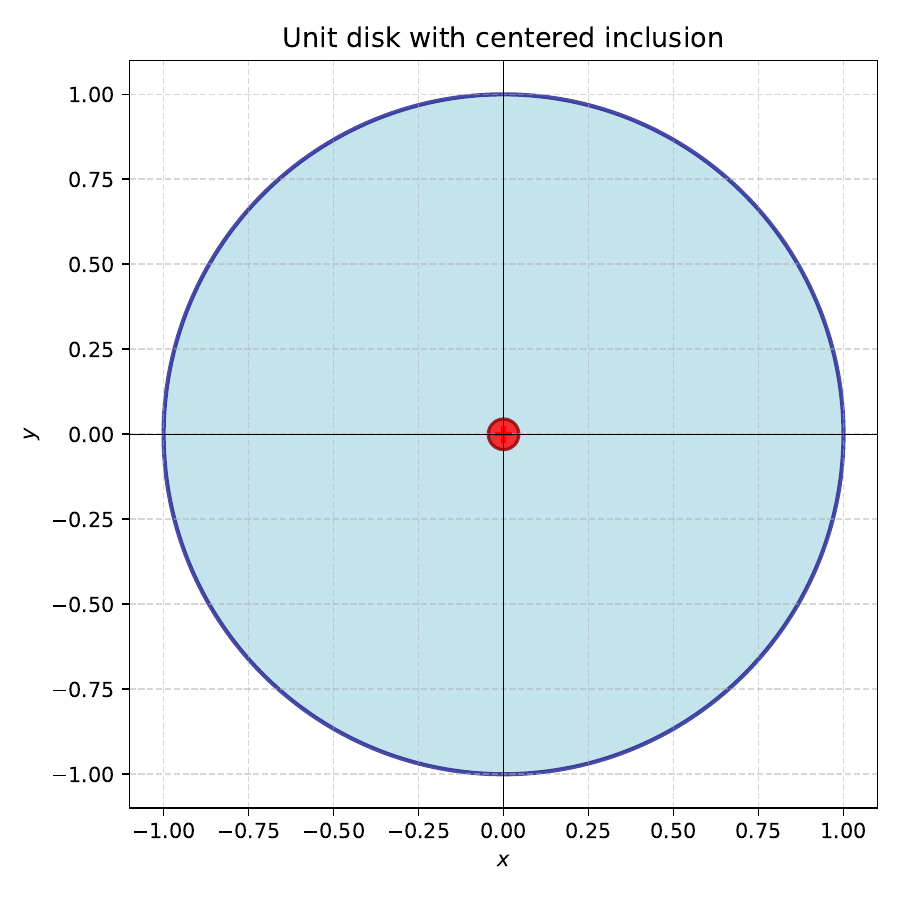}
	\caption{Unit disk with centered inclusion.}
\end{figure}

\begin{table}[!htbp]
	\centering
	\caption{Unit disk with a centered inclusion of size $\vep=10^{-4}$: first ten computed eigenvalues and reference values.}
	\label{tab:disk}
	\small\setlength{\tabcolsep}{4pt}
	\begin{tabular}{cccccc}
		\toprule
		\multicolumn{3}{c}{Eigenvalue $\lambda$} & \multicolumn{3}{c}{Frequency $z=\sqrt{\lambda}$} \\
		\midrule
		Numerical & Reference & Error & Numerical & Reference & Error \\
		\midrule
		1.8633798542  & 1.8441979339   & 1.92$\times 10^{-2}$ & 1.3650567220  & 1.3580124940   & 7.04$\times 10^{-3}$ \\
		7.1125772200  & 5.7831859629  & 1.33$\times 10^{0}$ & 2.6669415479  & 2.4048255577  & 2.62$\times 10^{-1}$ \\
		14.6821549558  & 14.6819706421  & 1.84$\times 10^{-4}$ & 3.8317300213  & 3.8317059702  & 2.41$\times 10^{-5}$ \\
		14.6821558149  & 14.6819706421  & 1.85$\times 10^{-4}$ & 3.8317301334  & 3.8317059702  & 2.42$\times 10^{-5}$ \\
		26.3751981163  & 26.3746164272  & 5.82$\times 10^{-4}$ & 5.1356789343  & 5.1356223018  & 5.66$\times 10^{-5}$ \\
		26.3751994892  & 26.3746164272  & 5.83$\times 10^{-4}$ & 5.1356790680  & 5.1356223018  & 5.68$\times 10^{-5}$ \\
		32.9416549851  & 30.4712623437  & 2.47$\times 10^{0}$ & 5.7394821182  & 5.5200781103  & 2.19$\times 10^{-1}$ \\
		40.7079107802  & 40.7064658182  & 1.44$\times 10^{-3}$ & 6.3802751336  & 6.3801618959  & 1.13$\times 10^{-4}$ \\
		40.7079152332  & 40.7064658182  & 1.45$\times 10^{-3}$ & 6.3802754825  & 6.3801618959  & 1.14$\times 10^{-4}$ \\
		49.2201222737  & 49.2184563217  & 1.67$\times 10^{-3}$ & 7.0157054010  & 7.0155866698  & 1.19$\times 10^{-4}$ \\
		\bottomrule
	\end{tabular}
\end{table}

Table~\ref{tab:disk} lists the first ten computed eigenvalues of \eqref{eq:dis eig}. The first eigenvalue corresponds to the Minnaert branch and is close to 
the characterization \eqref{eq:asy}  in Theorem \ref{thm:main}. The error is 
\begin{align*}
|z_{1,\mathrm{num}}-\sqrt{2}|\approx 4.92\times 10^{-2}. 
\end{align*}
The magnitude of error agrees with the estimation \eqref{eq:asy} with $\eta_2(\vep)\approx 0.10857$.

By solving the refined equation \eqref{eq:modify} numerically, we have the first reference value for the Minnaert frequency $z_{1,\mathrm{ref}}\approx 1.3580124940$. The error is
\begin{align*}
|z_{1,\mathrm{num}}-z_{1,\mathrm{ref}}|\approx 7.044\times 10^{-3}.
\end{align*}

The remaining reference eigenvalues are background Dirichlet modes.  The double eigenvalues near $14.6820$, $26.3746$, and $40.7065$ are reproduced as close pairs, in agreement with the multiplicity preservation stated in Theorem~\ref{thm:main}.

{The two comparatively large discrepancies in Table~\ref{tab:disk} correspond to the radially symmetric modes and remain consistent with the worst-case logarithmic estimate established in Theorem \ref{thm:main}.}

Then we change the size of inclusion to $\vep=10^{-6}$. The exact background eigenvalues are still the same.
\begin{table}[!htbp]
	\centering
	\caption{Unit disk with a centered inclusion of size $\vep=10^{-6}$: first ten computed eigenvalues and reference values.}
	\label{tab:disk1e6}
	\small\setlength{\tabcolsep}{4pt}
	\begin{tabular}{cccccc}
		\toprule
		\multicolumn{3}{c}{Eigenvalue $\lambda$} & \multicolumn{3}{c}{Frequency $z=\sqrt{\lambda}$} \\
		\midrule
		Numerical & Reference & Error & Numerical & Reference & Error \\
		\midrule
    1.907547503 & 1.8951318638  & 1.24$\times 10^{-2}$ & 1.3811399288  & 1.3766378840  & 4.50$\times 10^{-3}$ \\
    6.647900576 & 5.7831859629  & 8.65$\times 10^{-1}$ & 2.5783522986  & 2.4048255577  & 1.74$\times 10^{-1}$ \\
    14.68201634 & 14.6819706421  & 4.57$\times 10^{-5}$ & 3.8317119338  & 3.8317059702  & 5.96$\times 10^{-6}$ \\
    14.68201690 & 14.6819706421  & 4.63$\times 10^{-5}$ & 3.8317120059  & 3.8317059702  & 6.04$\times 10^{-6}$ \\
    26.37476218 & 26.3746164272  & 1.46$\times 10^{-4}$ & 5.1356364920  & 5.1356223018  & 1.42$\times 10^{-5}$ \\
    26.37476269 & 26.3746164272  & 1.46$\times 10^{-4}$ & 5.1356365418  & 5.1356223018  & 1.42$\times 10^{-5}$ \\
    32.00563798 & 30.4712623437  & 1.53$\times 10^{0}$ & 5.6573525590  & 5.5200781103  & 1.37$\times 10^{-1}$ \\
	40.70682765 & 40.7064658182  & 3.62$\times 10^{-4}$ & 6.3801902519  & 6.3801618959  & 2.84$\times 10^{-5}$ \\
	40.70682892 & 40.7064658182  & 3.63$\times 10^{-4}$ & 6.3801903518  & 6.3801618959  & 2.85$\times 10^{-5}$ \\
	49.21887199 & 49.2184563217  & 4.16$\times 10^{-4}$ & 7.0156162947  & 7.0155866698  & 2.96$\times 10^{-5}$ \\
		\bottomrule
	\end{tabular}
\end{table}

Table~\ref{tab:disk1e6} lists the first ten computed eigenvalues. The first eigenvalue corresponds to the Minnaert branch and is close to 
the characterization \eqref{eq:asy}  in Theorem \ref{thm:main}. The error is
\[
|z_{1,\mathrm{num}}-\sqrt{2}|\approx 3.31\times 10^{-2}. 
\]
The magnitude of error agrees with the estimation \eqref{eq:asy} with $\eta_2(\vep)\approx 0.07238$. Compared to the case $\vep=10^{-4}$,
the error between $z_{1,\mathrm{num}}$ and $\sqrt{2}$ is smaller, which is also agree with the theoretical prediction.

By solving the refined equation \eqref{eq:modify} numerically, we have the first reference value for the Minnaert frequency $z_{1,\mathrm{ref}}\approx 1.3766378840$. The error is
\[
|z_{1,\mathrm{num}}-z_{1,\mathrm{ref}}|\approx 4.502\times 10^{-3}.
\]

The remaining reference eigenvalues are background Dirichlet modes.  The double eigenvalues near $14.6820$, $26.3746$, and $40.7065$ are reproduced as close pairs, in agreement with the multiplicity preservation stated in Theorem~\ref{thm:main}.

\subsection{Square with a centered inclusion}\label{sec:num-square}

In this example, we take $D=(-1,1)^2$ and take the centered circular inclusion $B(0,\vep)$. The background eigenvalues of the homogeneous square are
\begin{align*}
\lambda_{m,n}=\frac{\pi^2}{4}(m^2+n^2),\qquad m,n\ge1.
\end{align*}
The Minnaert prediction for the reference inclusion remains $\lambda_M=2$.
\begin{figure}[!h]
	\centering
	\includegraphics[width=0.5\textwidth]{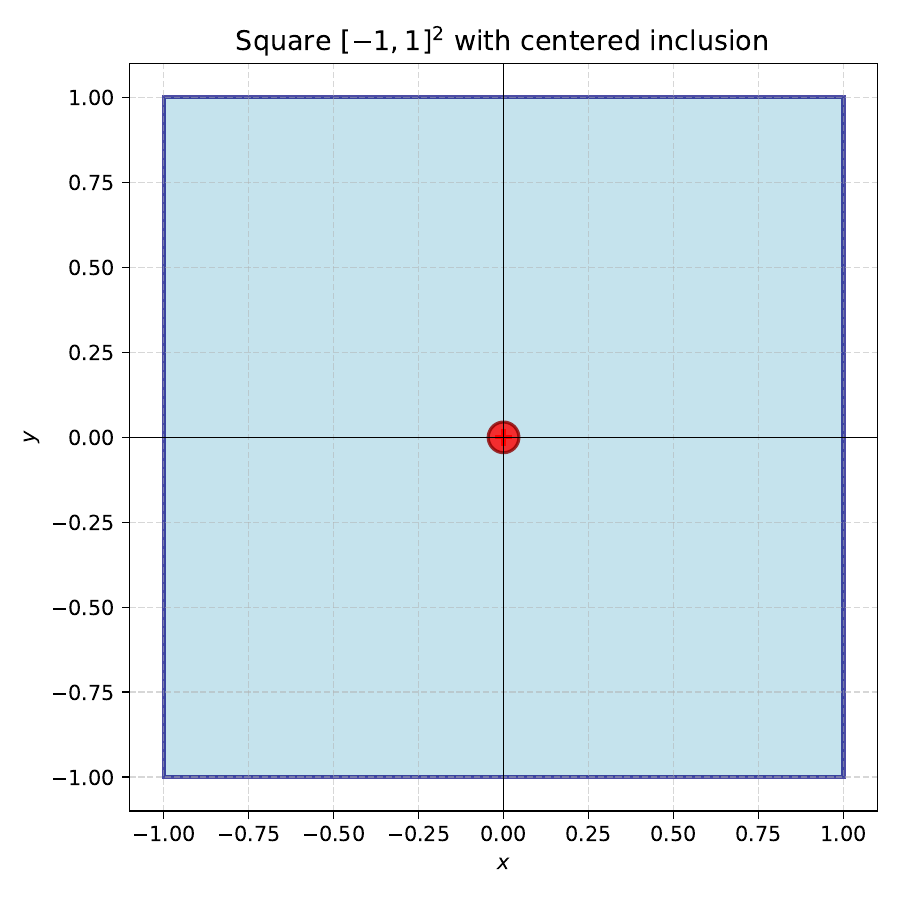}
	\caption{Square with centered inclusion.}
\end{figure}

\begin{table}[!htbp]
\centering
\caption{Square $[-1,1]^2$ with a centered inclusion: first ten computed eigenvalues and reference values.}
\label{tab:square}
\small\setlength{\tabcolsep}{4pt}
\begin{tabular}{cccccc}
\toprule
 \multicolumn{3}{c}{Eigenvalue $\lambda$} & \multicolumn{3}{c}{Frequency $z=\sqrt{\lambda}$} \\
\midrule
 Numerical & Reference & Error & Numerical & Reference & Error \\
\midrule
1.8225152367  & 1.7918704888   & 3.06$\times 10^{-2}$ & 1.3500056432  & 1.3386076680  & 1.14$\times 10^{-2}$ \\
6.1289146851  & 4.9348022005  & 1.19$\times 10^{0}$ & 2.4756644936  & 2.2214414691  & 2.54$\times 10^{-1}$ \\
12.3370654614  & 12.3370055014  & 6.00$\times 10^{-5}$ & 3.5124159010  & 3.5124073655  & 8.54$\times 10^{-6}$ \\
12.3370661744  & 12.3370055014  & 6.07$\times 10^{-5}$ & 3.5124160025  & 3.5124073655  & 8.64$\times 10^{-6}$ \\
19.7393689928  & 19.7392088022  & 1.60$\times 10^{-4}$ & 4.4429009659  & 4.4428829382  & 1.80$\times 10^{-5}$ \\
24.6742350454  & 24.6740110027  & 2.24$\times 10^{-4}$ & 4.9673166846  & 4.9672941329  & 2.26$\times 10^{-5}$ \\
26.4496848463  & 24.6740110027  & 1.78$\times 10^{0}$ & 5.1429257088  & 4.9672941329  & 1.76$\times 10^{-1}$ \\
32.0766334324  & 32.0762143035  & 4.19$\times 10^{-4}$ & 5.6636237015  & 5.6635866996  & 3.70$\times 10^{-5}$ \\
32.0766378952  & 32.0762143035  & 4.24$\times 10^{-4}$ & 5.6636240955  & 5.6635866996  & 3.74$\times 10^{-5}$ \\
41.9464325765  & 41.9458187046  & 6.14$\times 10^{-4}$ & 6.4766065634  & 6.4765591717  & 4.74$\times 10^{-5}$ \\
\bottomrule
\end{tabular}
\end{table}

The first eigenvalue  corresponds to the Minnaert branch and is close to 
the characterization \eqref{eq:asy}  in Theorem \ref{thm:main}. The error is 
\begin{align*}
|z_{1,\mathrm{num}}-\sqrt{2}|\approx 6.42\times 10^{-2}. 
\end{align*}
The magnitude of error agrees with the estimation \eqref{eq:asy} with $\eta_2(\vep)\approx 0.10857$.

By solving the refined equation \eqref{eq:modify} numerically, we have the first reference value for the Minnaert frequency $z_{1,\mathrm{ref}}\approx 1.338607668$. The error is
\begin{align*}
|z_{1,\mathrm{num}}-z_{1,\mathrm{ref}}|\approx 1.14\times 10^{-2}.
\end{align*}

The remaining reference eigenvalues are background Dirichlet modes.  As in the disk case, the double eigenvalues are recovered as the close pairs, confirming multiplicity preservation.

\subsection{Unit disk with an eccentric inclusion}\label{sec:num-ecc}

Then we consider the same unit disk but move the inclusion to $y_0=(0,0.9)$, i.e., $\Omega_\vep(y_0)=B((0,0.9),\vep)$. 
The background eigenvalues are same as the unit disk example in \ref{sec:num-disk}.
The same reference inclusion shape gives the same leading-order Minnaert value $\lambda_M=2$.
\begin{figure}[!h]
	\centering
	\includegraphics[width=0.5\textwidth]{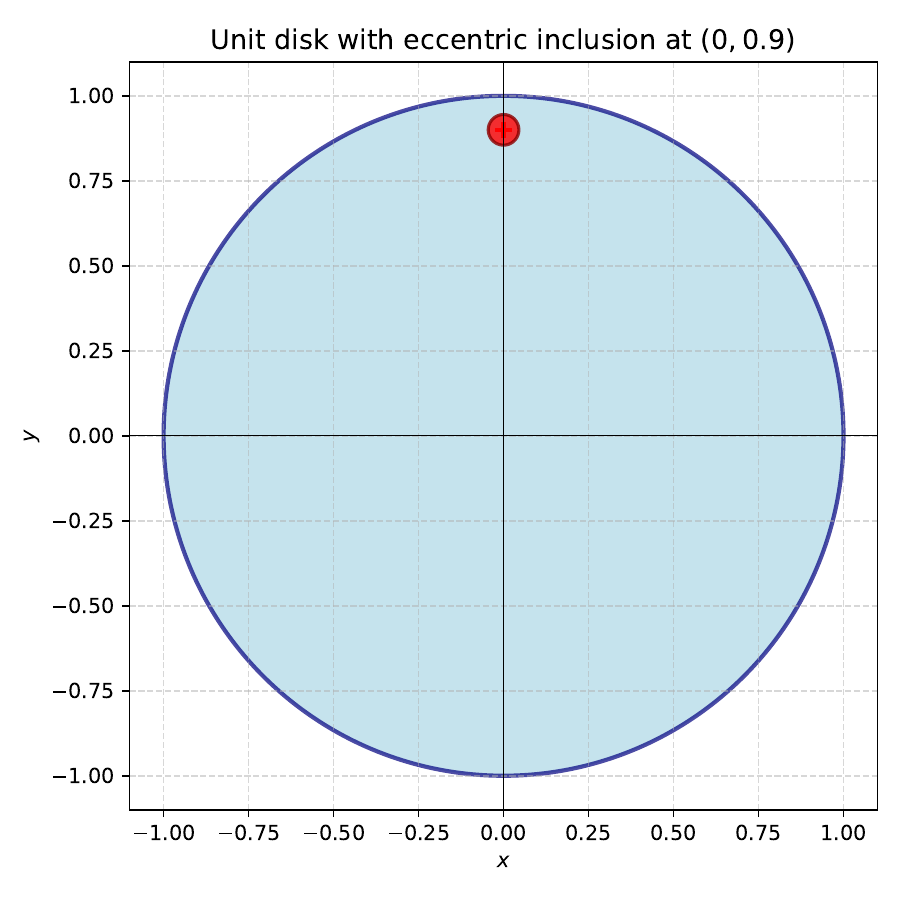}
	\caption{Unit disk with an eccentric inclusion.}
\end{figure}

\begin{table}[!htbp]
	\centering
	\caption{Unit disk with an eccentric inclusion at $(0,0.9)$: first ten computed eigenvalues and reference values.}
	\label{tab:ecc}
	\small\setlength{\tabcolsep}{4pt}
	\begin{tabular}{cccccc}
		\toprule
		\multicolumn{3}{c}{Eigenvalue $\lambda$} & \multicolumn{3}{c}{Frequency $z=\sqrt{\lambda}$} \\
		\midrule
		Numerical & Reference & Error & Numerical & Reference & Error \\
		\midrule
  2.4248836850  & 2.4597609603  & 3.49$\times 10^{-2}$ & 1.5572038033  & 1.5683625092  & 1.12$\times 10^{-2}$ \\
5.8114950183  & 5.7831859629  & 2.83$\times 10^{-2}$ & 2.4107042577  & 2.4048255577  & 5.88$\times 10^{-3}$ \\
14.6822056878  & 14.6819706421  & 2.35$\times 10^{-4}$ & 3.8317366412  & 3.8317059702  & 3.07$\times 10^{-5}$ \\
14.7786342544  & 14.6819706421  & 9.67$\times 10^{-2}$ & 3.8442989289  & 3.8317059702  & 1.26$\times 10^{-2}$ \\
26.3753632305  & 26.3746164272  & 7.47$\times 10^{-4}$ & 5.1356950095  & 5.1356223018  & 7.27$\times 10^{-5}$ \\
26.5263219670  & 26.3746164272  & 1.52$\times 10^{-1}$ & 5.1503710514  & 5.1356223018  & 1.47$\times 10^{-2}$ \\
30.5591832403  & 30.4712623437  & 8.79$\times 10^{-2}$ & 5.5280361106  & 5.5200781103  & 7.96$\times 10^{-3}$ \\
40.7082626645  & 40.7064658182  & 1.80$\times 10^{-3}$ & 6.3803027095  & 6.3801618959  & 1.41$\times 10^{-4}$ \\
40.9261537444  & 40.7064658182  & 2.20$\times 10^{-1}$ & 6.3973552148  & 6.3801618959  & 1.72$\times 10^{-2}$ \\
49.2210144628  & 49.2184563217  & 2.56$\times 10^{-3}$ & 7.0157689858  & 7.0155866698  & 1.82$\times 10^{-4}$ \\
		\bottomrule
	\end{tabular}
\end{table}

Table~\ref{tab:ecc} lists the first ten computed eigenvalues. The first eigenvalue corresponds to the Minnaert branch and  is close to 
characterization \eqref{eq:asy} in Theorem \ref{thm:main}. The error is 
\begin{align*}
|z_{1,\mathrm{num}}-\sqrt{2}|\approx 1.43\times 10^{-1}. 
\end{align*}
The magnitude of error agrees with the estimation \eqref{eq:asy} with $\eta_2(\vep)\approx 0.10857$.

By solving the refined equation \eqref{eq:modify} numerically, we have the first reference value for the Minnaert frequency $z_{1,\mathrm{ref}}\approx 1.5683625092$. The error is
\begin{align*}
|z_{1,\mathrm{num}}-z_{1,\mathrm{ref}}|\approx 1.12\times 10^{-2}.
\end{align*}

The remaining reference eigenvalues are background Dirichlet modes.  The double eigenvalues are reproduced as close pairs, in agreement with the multiplicity preservation stated in Theorem~\ref{thm:main}.

\subsection{Peanut-shaped domain with off-centered inclusion}\label{sec:num-peanut}

Finally we consider the peanut-shaped domain, which is defined by the radial function
\begin{align*}
r(t)=\sqrt{\cos^2t + 0.25\sin^2t}, \quad t\in[0,2\pi].
\end{align*}
The inclusion locates at $y_0=(-0.5,0)$, i.e., $\Omega_\vep(y_0)=B((-0.5,0),\vep)$. 
The background exact eigenvalues are unknown, we use the numerical solution without inclusion as reference.
The same reference inclusion shape gives the same leading-order Minnaert value $\lambda_M=2$.
\begin{figure}[!h]
	\centering
	\includegraphics[width=0.5\textwidth]{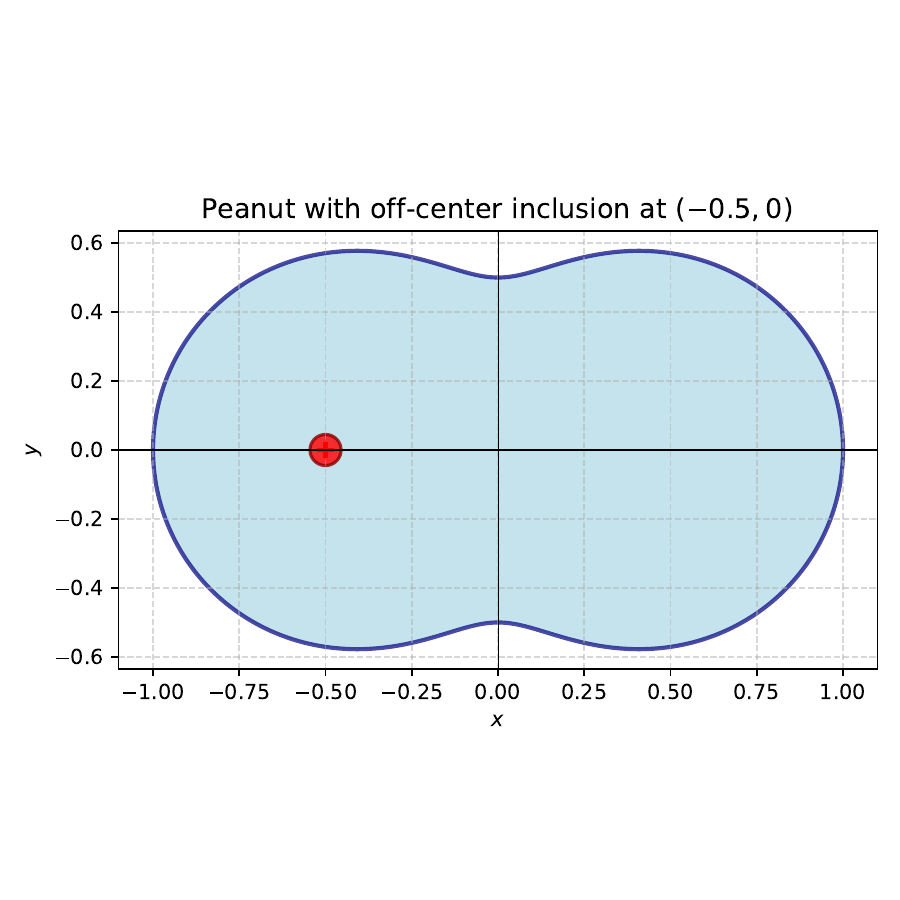}
	\caption{Peanut-shaped domain with off-centered inclusion.}
\end{figure}

\begin{table}[!htbp]
	\centering
	\caption{Peanut-shaped domain with off-centered inclusion at $(-0.5,0)$: first ten computed eigenvalues and reference values.}
	\label{tab:peanut}
	\small\setlength{\tabcolsep}{4pt}
	\begin{tabular}{cccccc}
		\toprule
		\multicolumn{3}{c}{Eigenvalue $\lambda$} & \multicolumn{3}{c}{Frequency $z=\sqrt{\lambda}$} \\
		\midrule
		Numerical & Reference & Error & Numerical & Reference & Error \\
		\midrule
		2.0685491541  &   2.0715159600  & 2.97$\times 10^{-3}$ & 1.4382451648  & 1.4392761931 & 1.03$\times 10^{-3}$ \\
		12.0219914650  & 11.2946647965  & 7.27$\times 10^{-1}$ & 3.4672743568  & 3.3607536054  & 1.07$\times 10^{-1}$ \\
		21.0459320120  & 19.2346023683  & 1.81$\times 10^{0}$ & 4.5875845509  & 4.3857271197  & 2.02$\times 10^{-1}$ \\
		34.8202024087  & 33.5786531401  & 1.24$\times 10^{0}$ & 5.9008645476  & 5.7947090643  & 1.06$\times 10^{-1}$ \\
		37.0086098319  & 37.0086078250  & 2.01$\times 10^{-6}$ & 6.0834702130  & 6.0834700480  & 1.65$\times 10^{-7}$ \\
		44.7410856447  & 44.7410816507  & 3.99$\times 10^{-6}$ & 6.6888777567  & 6.6888774582  & 2.99$\times 10^{-7}$ \\
		51.9723932006  & 51.8783267262  & 9.41$\times 10^{-2}$ & 7.2091881097  & 7.2026610864  & 6.53$\times 10^{-3}$ \\
		63.1558442938  & 63.1558411704  & 3.12$\times 10^{-6}$ & 7.9470651371  & 7.9470649406  & 1.97$\times 10^{-7}$ \\
		73.1138158999  & 73.1099891929  & 3.83$\times 10^{-3}$ & 8.5506617229  & 8.5504379533  & 2.24$\times 10^{-4}$ \\
		82.0211203153  & 81.0467575578  & 9.74$\times 10^{-1}$ & 9.0565512374  & 9.0025972673  & 5.40$\times 10^{-2}$ \\
		\bottomrule
	\end{tabular}
\end{table}

Table~\ref{tab:peanut} lists the first ten computed eigenvalues. The first eigenvalue corresponds to the Minnaert branch and  is close to 
the prediction in Theorem \ref{thm:main}. The error is 
\begin{align*}
|z_{1,\mathrm{num}}-\sqrt{2}|\approx 2.40\times 10^{-2}. 
\end{align*}
The magnitude of error agrees with the estimation \eqref{eq:asy} with $\eta_2(\vep)\approx 0.10857$.

By solving the refined equation \eqref{eq:modify} numerically, we have the first reference value for the Minnaert frequency $z_{1,\mathrm{ref}}\approx 1.4392761931$. The error is
\begin{align*}
|z_{1,\mathrm{num}}-z_{1,\mathrm{ref}}|\approx 1.03\times 10^{-3}.
\end{align*}
The remaining reference eigenvalues are background Dirichlet modes. 

The four numerical experiments confirm both parts of Theorem~\ref{thm:main}: an additional Minnaert eigenvalue branch appears near $\lambda_M=2$, while the background Dirichlet eigenvalues persist as perturbed clusters with preserved multiplicities. The errors of the Minnaert eigenvalue are consistent with the predicted $O(|\ln\vep|^{-1})$.

\section*{Acknowledgment}

The work of H. Diao is supported by National Natural Science Foundation of China  (Grant No. 12371422),  and the Fundamental Research Funds for the Central Universities, JLU. The work of M. Sini is supported by 2025 National Foreign Experts Program (Grant No. D20250157). The work of L. Li and M. Sini is supported by the Austrian Science Fund (FWF) grant P: 36942.
The work of Q. Zhai is supported by National Natural Science Foundation of China (12271208).

\end{document}